\numberwithin{equation}{section}
\newtheoremstyle{myremark}{10pt}{10pt}{}{}{\bfseries}{.}{.5em}{}
 \newtheorem{thm}{Theorem}[section]
 \newtheorem{lem}[thm]{Lemma}
 \newtheorem{prop}[thm]{Proposition}
 \theoremstyle{definition}
 \newtheorem{defn}[thm]{Definition}
 \newtheorem{exmp}[thm]{Example}
 \newtheorem{rem}[thm]{Remark}
 \newcommand{\D}{\bigtriangleup_2}
\title[On fractional Orlicz Boundary Hardy inequalities]{On fractional Orlicz  boundary Hardy inequalities}
 \author[S. Roy]{ Subhajit Roy}
\keywords{Young function, Orlicz spaces, Fractional order Sobolev spaces, Fractional boundary Hardy inequalities}
\subjclass{ 46E30, 35R11, 35A23.}
\email{rsubhajit.math@gmail.com}}
\begin{document}

\maketitle

 \centerline{Department of Mathematics, Indian Institute of Technology Madras,
 }
 \centerline{Chennai  600036, India}

\begin{abstract} 
We investigate the fractional Orlicz boundary Hardy-type inequality for bounded Lipschitz domains. Further, we establish fractional Orlicz boundary Hardy-type inequalities with logarithmic corrections for specific critical cases across various domains, such as bounded Lipschitz domains, domains above the graph of a Lipschitz function, and the complement of a bounded Lipschitz domain.
\end{abstract}

\section{Introduction}
For  $p\in(1,\infty)$, recall the {\it{boundary Hardy inequality}} for a bounded Lipschitz domain $\Omega\subset\mathbb{R}^N$:  
\begin{equation}\label{bd-hardy}
    \int_{\Omega}\frac{|u(x)|^p}{\delta^p_\Omega(x)}dx\leq C\int_{\Omega}|\nabla u(x)|^pdx,\quad\forall\,u\in\mathcal{C}_c^1(\Omega),
\end{equation}  
for some $C=C(N,p,\Omega)>0,$ where  $\delta_\Omega(x):=\min_{y\in \partial\Omega} |x-y|$ (see \cite{1962}).  
 This inequality has been further
developed for various domains such as: domains with H\"older boundary \cite{Kufner1985}, unbounded John domains \cite{Juha2008}, and domains with uniformly p-fat complement \cite{Lewis1998}. For more details on the boundary Hardy inequalities, see \cite{brezis20000,ujjal2023,Pinchover,Matskewich1997} and the references therein. For the generalization of \eqref{bd-hardy} in the Orlicz setting, we refer to  \cite{subha,subha2,Mihai2012,kal2009} for $\Omega=\mathbb{R}^N\setminus\{0\}$, \cite{cianchi1999} for bounded Lipschitz domain, and \cite{Buckley2013,Buckley2004} for  bounded domains with uniformly p-fat complement. We are interested in studying the fractional version of these inequalities, given its significant applications in stochastic processes (see, for example, \cite{Bogdan22003, dyda2011, chen2003, Dyda2007} and the references therein). We fix a few notations to state the fractional version of \eqref{bd-hardy}.
 For $u\in \mathcal{C}_c(\Omega)$ and $s\in (0,1),$ let $D_su$ be the s-H\"older quotient and $d\mu$ be the product measure on $\Omega\times \Omega$ defined as $$D_su(x,y)=\frac{u(x)-u(y)}{|x-y|^s},\quad d\mu=\frac{dxdy}{|x-y|^N}.$$ We consider the following classes of domains in  $\mathbb{R}^N:$
\begin{enumerate}
    \item[$\mathcal{A}_1$] $=\left\{\Omega:\Omega\; \text{is a bounded Lipschitz domain} \right\}$,
     \item[$\mathcal{A}_2$] $=\left\{\Omega: \Omega\; \text{is a domain above the graph of a Lipschitz function}\; \mathbb{R}^{N-1}\rightarrow\mathbb{R}\right\}$,
     
      \item[$\mathcal{A}_3$]  $=\left\{\Omega:\Omega\; \text{is a complement of a bounded Lipschitz domain}\right\}$,
      \item[$\mathcal{A}_4$] $=\left\{\Omega:\Omega\; \text{is a complement of a point}\right\}$.
\end{enumerate}
For $p\in (1,\infty),s\in(0,1)$, and $\Omega$ satisfying one of the following assumptions: 
\begin{enumerate}[(i)]
    \item $\Omega\in \mathcal{A}_1$ and $sp>1$,
    \item $\Omega\in \mathcal{A}_2$ and $sp\neq 1$,

    \item $\Omega\in \mathcal{A}_3$, $sp\ne 1,$ and $sp\ne N$,
     \item $\Omega\in \mathcal{A}_4$ and $sp\neq N$,
\end{enumerate}
 Dyda \cite[Theorem 1.1]{Dyda2007} established the following fractional boundary Hardy inequality
 \begin{equation}\label{frac-dyda}
   \int_{\Omega}\left(\frac{|u(x)|}{\delta^s_\Omega(x)}\right)^p dx\leq C \int_{\Omega}\int_{\Omega}|D_su(x,y)|^p d\mu,\quad \,\forall\,u\in\mathcal{C}_c^1(\Omega),
\end{equation}
where $C$ is a positive constant independent of $u$. The above inequality is also established for convex domains \cite{Brasco2018,Loss2010}, unbounded John domains \cite{john}, and domains with uniformly p-fat complement \cite{Edmunds2014}. The best constant of \eqref{frac-dyda} for the upper half-space is addressed for $p = 2$ in \cite{dyda2011} and for $p \geq 1$ in \cite{frank2010}. 

\smallskip

The above inequality fails for the critical cases: $sp=1$ for $\Omega\in\mathcal{A}_1$ \cite[Section 2]{Dyda2007} and $\Omega\in\mathcal{A}_2$ \cite[Appendix]{adisahuroy}, $sp=N$ for $\Omega$ in $\mathcal{A}_3$ and $ \mathcal{A}_4$  \cite[Section 2]{Dyda2007}. The critical case $sp=N$ for $\Omega=\mathbb{R}^N\setminus\{0\}$ is studied by Edmunds and Triebel \cite{triebel1999}. They  proved an appropriate version of \eqref{frac-dyda} by adding a logarithmic
weight on the left-hand side. Nguyen and Squassina \cite{Nguyen2018} extended their result and established the full range of the fractional Caffarelli-Kohn-Nirenberg inequality. Adimurthi, Jana, and Roy \cite{adipurbroy2} studied the critical case $sp=1$ for $\Omega\in \mathcal{A}_1$ in one dimension. For $\Omega$ in $\mathcal{A}_1$, $\mathcal{A}_2$, and $\mathcal{A}_3,$ the critical cases are studied by Adimurthi, Roy, and Sahu \cite{adisahuroy}.   In fact, their results are much more general. More precisely, they proved that the following Hardy-type inequality 
   (see \cite[Theorem 2]{adisahuroy} with $\tau=p$):
\begin{equation}\label{critical_cases}
 \int_{\Omega}\left(\frac{|u(x)|}{\delta_\Omega^{s}(x)}\right)^p\frac{dx}{\ln^{p}\left(\rho(x)\right)}\leq C\left(\int_{\Omega}\int_{\Omega}|D_su(x,y)|^pd\mu+\int_\Omega|u(x)|^pdx\right),\,\, \forall\,u\in \mathcal{C}_c^1(\Omega)
\end{equation}
holds in each of the following cases:
\begin{enumerate}[(i)]
    \item $\Omega\in \mathcal{A}_1$, $sp=1, $ and $\rho(x)=\frac{2R}{\delta_\Omega(x)},$ where $R\ge R_\Omega:=\sup\{\delta_\Omega(x):x\in\Omega\}$
    \item $\Omega\in \mathcal{A}_2$, $sp=1, $ $supp(u)\subset\{x\in \Omega: \delta_\Omega(x)<R\}$,   and $\rho(x)=\frac{2R}{\delta_\Omega(x)}$ for some $R>0$
    \item $\Omega\in \mathcal{A}_3$, $sp=N,\;N>1, $    and $\rho(x)=\max\left\{\frac{2R}{\delta_\Omega(x)},\frac{2\delta_\Omega(x)}{R}\right\}$ for some $R>0$.
\end{enumerate}

\smallskip

It is known that \eqref{frac-dyda} fails for $\Omega\in \mathcal{A}_1$ in the case $sp< 1$, see, for example, \cite[Page 578]{Dyda2007}. For bounded $\mathcal{C}^\infty$ domains $\Omega$ and $sp<1,$ Triebel \cite[Page 259]{Hans1978} added the $L^p$-norm of $u$ to the right hand of \eqref{frac-dyda} and established
\begin{equation}\label{dyda-2}
   \int_{\Omega}\left(\frac{|u(x)|}{\delta^s_\Omega(x)}\right)^p dx\leq C \left(\int_{\Omega}\int_{\Omega}|D_su(x,y)|^p d\mu+\int_\Omega|u(x)|^pdx\right),\quad \,\forall\,u\in\mathcal{C}_c^\infty(\Omega)
\end{equation}
where $C=C(s,p,N,\Omega)>0.$ This result is extended to $\Omega\in \mathcal{A}_1$ and $u\in \mathcal{C}_c^1(\Omega)$ in \cite{chen2003} for $p=2$, and in \cite{adisahuroy, Dyda2007} for $p>1$.

\smallskip

\smallskip

The aim of this article is to   extend \eqref{critical_cases} for $\Omega\in\mathcal{A}_1, \mathcal{A}_2,\mathcal{A}_3$ and \eqref{dyda-2} for $\Omega\in\mathcal{A}_1,$ by replacing the convex function $t^p$ with a more general Young function that satisfies the $\D$-condition.
\begin{defn}[Young function]\label{defn-orlicz}
  A  function  $\Phi:[0,\infty)\to [0,\infty)$ is called a Young function if it can be represented in the form
 \begin{equation*}
     \Phi(t)=\int_0^t \varphi(s) ds\quad   \text{for}\,\, t\geq 0,
 \end{equation*} 
 where $\varphi$ is a non-decreasing right continuous function on $[0,\infty)$ satisfying $\varphi(0)=0,$ $\varphi(t)>0$ for $t>0,$ and $\lim_{t\to \infty}\varphi(t)=\infty$.
         \end{defn}
         \begin{defn}[The $\D$-condition]
     We say that the Young function $\Phi$ satisfies the  $\D$-condition or $ \Phi\in \D$ if there exists a constant $C> 0$ such that 
\begin{equation*}
    \Phi(2t)\leq C\Phi(t),\;\;\;\forall\,t\geq 0.
\end{equation*}
         \end{defn}
         To state our results, recall the indices $p^-_\Phi$ and $p^+_\Phi$ (cf. \cite{Mihai2012}):
 \begin{equation*}
p_\Phi^-:=\inf_{t>0}\frac{t\varphi(t)}{\Phi(t)},\qquad p_\Phi^+:=\sup_{t>0}\frac{t\varphi(t)}{\Phi(t)}.
 \end{equation*}
 For a Young function $\Phi\in \D$, it can be verified that 
 $ 1\le p_\Phi^- \leq p_\Phi^+< \infty.$ 
For example, for $\Phi(t)=t^p$ with $p\in (1,\infty)$, we have $p_\Phi^-=p_\Phi^+=p.$ For more examples, see \cite[Example 5.5]{subha2} (Example \ref{example-p3}) and the references therein.

\smallskip

In \cite{roy2022}, the authors extended \eqref{frac-dyda} by replacing $t^p$ with a Young function satisfying the $\D$-condition. More precisely, they have provided sufficient conditions on $s,N,\Phi$ so that the following fractional Orlicz boundary Hardy  inequality holds:
\begin{equation}\label{Hardy-p3}
    \int_{\Omega}\Phi\left(\frac{|u(x)|}{\delta^s_\Omega(x)}\right)dx\leq C\int_{\Omega}\int_{\Omega}\Phi\left(|D_su(x,y)|\right)d\mu, \quad \forall\,u\in \mathcal{C}_c^1(\Omega),
\end{equation} 
where $C$ is positive constant independent of $u.$ The above inequality is established for $\Omega$ in $\mathcal{A}_1,\,\mathcal{A}_2$, $\mathcal{A}_3$, and $\Omega=\mathbb{R}^N\setminus\{0\}\in \mathcal{A}_4$.  Inequality \eqref{Hardy-p3} is also proved in \cite{salort2022} for $\Omega=(0,\infty)$ and in \cite{Cianchi2022,subha2,  salort2022*} for $\Omega=\mathbb{R}^N\setminus\{0\}$. However, for $\Omega\in\mathcal{A}_1,$ the above inequality fails in the case $sp^+_\Phi\le 1$ (see \cite[Theorem 1.3]{roy2022}). 

\smallskip

In this article, first, we extend \eqref{critical_cases} for $\Omega\in \mathcal{A}_1$ in the critical case $sp^+_\Phi=1$ by replacing the convex function $t^p$ with a more general Young function satisfying the $\D$-condition. In particular, we have the following result.
\begin{thm}\label{Orlicz-Hardy2-p3}  Let $N\geq 1,\,s\in (0,1),$ and $\Omega\in \mathcal{A}_1$. Let $R_\Omega:=\sup\{\delta_\Omega(x):x\in\Omega\}$ and $R\ge R_\Omega$. 
For any Young function $\Phi\in \D,$  if $sp^+_\Phi=1,$ then there exists $C=C(s,N,\Phi,\Omega)>0$ so that 
  \begin{equation}\label{2}
 \int_{\Omega}\Phi\left(\frac{|u(x)|}{\delta_\Omega^{s}(x)}\right)\frac{dx}{\ln^{p_\Phi^+}\left(\frac{2R}{\delta_\Omega(x)}\right)}\leq C\left(\int_{\Omega}\int_{\Omega}\Phi\left(|D_su(x,y)|\right)d\mu+\int_\Omega\Phi(|u(x)|)dx\right),\,\, \forall\,u\in \mathcal{C}_c^1(\Omega).
\end{equation}
\end{thm}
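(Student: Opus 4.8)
The plan is to reduce the global estimate \eqref{2} to a local one near the boundary, then to a model estimate on a half-space slab, and finally to a one-dimensional weighted inequality with logarithmic weight that encodes the critical exponent $sp_\Phi^+=1$. First I would use the Lipschitz character of $\Omega$ to cover a neighbourhood of $\partial\Omega$ by finitely many boxes in which, after a bi-Lipschitz change of variables, $\Omega$ looks like the region above the graph of a Lipschitz function; on the complementary compact set $\{x\in\Omega:\delta_\Omega(x)\ge c\}$ the weight $\Phi(|u|/\delta_\Omega^s)/\ln^{p_\Phi^+}(2R/\delta_\Omega)$ is controlled by $\int_\Omega\Phi(|u|)\,dx$ up to a constant (using the doubling/monotonicity properties of $\Phi$ coming from $1\le p_\Phi^-\le p_\Phi^+<\infty$), so the whole content is in the boundary strip. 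A standard subordination argument (as in \cite{Dyda2007, adisahuroy, roy2022}) lets me pass from the Gagliardo-type seminorm over $\Omega\times\Omega$ to the corresponding seminorm over the model domain, at the cost of also picking up an $L^\Phi$-type term, which is exactly why \eqref{2} is allowed to carry the extra $\int_\Omega\Phi(|u|)\,dx$ on the right.

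Next I would treat the model case $\Omega=\{(x',x_N):x_N>\gamma(x')\}$ with $\gamma$ Lipschitz, where $\delta_\Omega(x)\simeq x_N-\gamma(x')$. Freezing $x'$ and writing $t=x_N-\gamma(x')$, the problem becomes the weighted one-dimensional inequality
\begin{equation*}
\int_0^\infty\Phi\!\left(\frac{|v(t)|}{t^s}\right)\frac{dt}{\ln^{p_\Phi^+}\!\left(\frac{2R}{t}\right)}\le C\left(\int_0^\infty\!\!\int_0^\infty\Phi\!\left(\frac{|v(t)-v(\tau)|}{|t-\tau|^s}\right)\frac{dt\,d\tau}{|t-\tau|}+\int_0^\infty\Phi(|v(t)|)\,dt\right),
\end{equation*}
for $v$ supported in $[0,R]$, under the hypothesis $sp_\Phi^+=1$. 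Here the key analytic input is the convexity of $\Phi$ together with the two-sided bound $\Phi(\lambda t)\le\max(\lambda^{p_\Phi^-},\lambda^{p_\Phi^+})\Phi(t)$ for $\lambda\ge 1$ and the reverse for $\lambda\le 1$. I would split $\int_0^\infty$ dyadically as $\sum_k\int_{2^{-k-1}R}^{2^{-k}R}$ and estimate $|v(t)|$ on the $k$-th ring by a telescoping sum of averages over the dyadic rings, $v(t)=\sum_{j}(v_j-v_{j+1})+\text{(tail)}$ where $v_j$ is the mean of $v$ over the $j$-th ring; the increment $|v_j-v_{j+1}|$ is controlled via Jensen by the Gagliardo double integral restricted to adjacent rings. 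Plugging $t^{-s}\simeq 2^{ks}$ and $\ln(2R/t)\simeq (k+1)\ln 2$ and using $sp_\Phi^+=1$ so that $2^{ksp_\Phi^+}\simeq 2^k$ balances the measure $|2^{-k}R|$ of each ring, the logarithmic weight $1/(k+1)^{p_\Phi^+}$ is exactly what makes $\sum_k 1/(k+1)^{p_\Phi^+}\cdot(\text{per-ring seminorm})$ summable after a discrete Hardy inequality; this is the analogue of the classical fact that $\sum 1/k$ diverges but $\sum(\text{something})/k^{p}$ can be summed against a bounded sequence of energies.

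The main obstacle I expect is making the telescoping / discrete-Hardy step work uniformly in the Orlicz setting rather than the power setting: because $\Phi$ is not homogeneous, one cannot simply pull constants through, and the naive bound $\Phi(\sum a_j)\lesssim\sum\Phi(a_j)$ loses too much unless the number of effective terms is controlled. The right device is a weighted Jensen inequality, $\Phi\big(\sum_j \theta_j a_j\big)\le\sum_j\theta_j\Phi(a_j)$ with a carefully chosen summable sequence $\theta_j$ (e.g. $\theta_j\propto (1+|j-k|)^{-2}$ on the $k$-th ring), combined with the growth bound to absorb the factor $\theta_j^{-1}$ at a cost that is again summable precisely when $sp_\Phi^+\le 1$. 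Once the one-dimensional critical inequality is in hand, integrating back in $x'$, summing over the finite cover, adding the interior contribution, and finally observing that $R\ge R_\Omega$ only improves the weight $\ln^{-p_\Phi^+}(2R/\delta_\Omega)$ (so the constant can be taken uniform in such $R$) completes the proof. I would also record that the same scheme, with $x_N$ replaced by $|x|$ and the slab by an annulus, is what underlies the companion critical results for $\mathcal{A}_2$, $\mathcal{A}_3$ mentioned in the introduction, but for Theorem \ref{Orlicz-Hardy2-p3} only the $\mathcal{A}_1$ localization is needed.
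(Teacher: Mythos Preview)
Your localization scheme---partition of unity near $\partial\Omega$, the interior piece handled trivially because $\delta_\Omega$ is bounded below there, and bi-Lipschitz flattening $F(x',x_N)=(x',x_N-\gamma(x'))$ on each boundary chart---matches the paper exactly, as does the idea of a dyadic decomposition in the normal variable and a telescoping of ring averages. Two points, however, separate your sketch from a proof.

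First, the reduction to a \emph{one-dimensional} inequality by ``freezing $x'$'' is a genuine gap: the Gagliardo form $\int\!\int\Phi(|D_su|)\,d\mu$ is nonlocal in all directions simultaneously, so slicing in $x_N$ does not produce the 1D double integral you write. The paper stays in $N$ dimensions throughout: it decomposes the slab $\Omega_{n,T}=(-n,n)^{N-1}\times(0,T)$ into dyadic layers $A_k=\{2^kT\le x_N<2^{k+1}T\}$, further partitions each $A_k$ into cubes $A_k^i$ of side $2^kT$, and applies the scaled Poincar\'e--Wirtinger inequality (Proposition~\ref{poincare}) on each $A_k^i$ and on each stacked pair $A_k^i\cup A_{k+1}^j$ (Lemmas~\ref{lemm-1} and~\ref{Lem-avg}). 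That is where the full double integral is consumed; the tangential directions are never frozen.

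Second, the mechanism you propose for summing the telescoped increments---weighted Jensen with $\theta_j\propto(1+|j-k|)^{-2}$ and then the growth bound to absorb $\theta_j^{-1}$---is not what the paper does, and as you yourself flag, it is unclear it closes: absorbing $\theta_j^{-1}$ costs a factor $\theta_j^{1-p_\Phi^+}$, and the resulting double sum over $j$ and $k$ still needs justification. The paper instead uses the refined two-term inequality of Lemma~\ref{Lem-H-p3},
\[
\Phi(a+b)\le\lambda\Phi(a)+\frac{C}{(\lambda-1)^{p_\Phi^+-1}}\Phi(b),\qquad\lambda\in(1,\Lambda],
\]
applied \emph{once per dyadic step} with the specific choice $\lambda(k)=\bigl(\tfrac{-k}{-k-1/2}\bigr)^{p_\Phi^+-1}$, so that $\lambda(k)-1\asymp(-k)^{-1}$. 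Together with $sp_\Phi^+=1$ (which gives $\Phi(2^{-ks}|(u)_{A_{k+1}^j}|)\le 2\,\Phi(2^{-(k+1)s}|(u)_{A_{k+1}^j}|)$) this produces the recursion
\[
\frac{2^{kN}a_k}{(-k)^{p_\Phi^+-1}}\le\frac{2^{N(k+1)}a_{k+1}}{(-k-\tfrac12)^{p_\Phi^+-1}}+C\int_{A_k\cup A_{k+1}}\int_{A_k\cup A_{k+1}}\Phi(|D_su|)\,d\mu,
\]
with $a_k=\sum_i\Phi(2^{-ks}|(u)_{A_k^i}|)$; summing over $k$ and using $(-k)^{1-p_\Phi^+}-(-k+\tfrac12)^{1-p_\Phi^+}\asymp(-k)^{-p_\Phi^+}$ delivers exactly the logarithmic weight. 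Your ``discrete Hardy inequality'' intuition is right, but the implementation is this single-step iteration driven by Lemma~\ref{Lem-H-p3}, not a global weighted Jensen estimate.
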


\smallskip

Now, we consider the case $sp^+_\Phi<1$ for $\Omega\in \mathcal{A}_1.$ 
In this case, we have the following extension of \eqref{dyda-2}:
\begin{thm}\label{Orlicz-Hardy1*-p3} Let $N\geq 1,\,s\in (0,1),$ and $\Omega\in \mathcal{A}_1$. For any Young function $\Phi\in \D,$  if $sp^+_\Phi<1,$ then there exists $C=C(s,N,\Phi,\Omega)>0$ so that 
\begin{equation}\label{1}
 \int_{\Omega}\Phi\left(\frac{|u(x)|}{\delta^s_\Omega(x)}\right)dx\leq C\left(\int_{\Omega}\int_{\Omega}\Phi\left(|D_su(x,y)|\right)d\mu+\int_\Omega\Phi(|u(x)|)dx\right),\quad\forall\,u\in \mathcal{C}_c^1(\Omega).
\end{equation}
\end{thm}
\smallskip

For proving Theorem \ref{Orlicz-Hardy2-p3} and Theorem \ref{Orlicz-Hardy1*-p3}, we adapt the idea of proof of Theorem 2 and Theorem 3 of \cite{adisahuroy} (see also \cite[Theorem 1.3]{adipurbroy2}) in the Orlicz setting.
 In their proof,  they have used a crucial lemma (see Lemma 3.2 of \cite{Nguyen2018}), which states that for $p,\Lambda\in(1,\infty),$ there exists $C=C(p,\Lambda)>0$ so that 
     \begin{equation}\label{in-1}
(a+b)^p\leq \lambda a^p+\frac{C}{(\lambda-1)^{p-1}}b^p,\;\;\;\forall\,a,b\in [0,\infty),\,\forall\,\lambda\in (1,\Lambda). 
  \end{equation} 
A less restricted version of the above inequality is also proved in \cite[Lemma 2.4]{adisahuroy}. This article used the Orlicz version of the above inequality established by Anoop, P. Roy, and S. Roy in \cite{subha2}. It says that 
 for any Young function $\Phi\in \D$ and $\Lambda>1,$ there exists $C=C(\Phi,\Lambda)>0$ so that
     \begin{equation}\label{in-2}
 \Phi(a+b)\leq \lambda\Phi(a)+\frac{C}{(\lambda-1)^{p^+_\Phi-1}}\Phi(b),\;\;\;\forall\,a,b\in [ 0,\infty),\,\forall\,\lambda\in (1,\Lambda].  
  \end{equation} 



\begin{rem} 
Let $(u)_\Omega$ be the  average of $u$ over $\Omega$, i.e.,  $(u)_\Omega=\frac{1}{|\Omega|}\int_\Omega u(x)\,dx,$
 where $|\Omega|$ is the Lebesgue measure of $\Omega.$ 
By applying fractional Orlicz Poincar\'e-Wirtinger inequality (Proposition \ref{poincare}), inequalities \eqref{2} and \eqref{1} are also can be restated as
\begin{equation*}
 \int_{\Omega}\Phi\left(\frac{|u(x)-(u)_\Omega|}{\delta^s_\Omega(x)}\right)\frac{dx}{\ln^{p^+_\Phi}\left(\frac{2R}{\delta_\Omega(x)}\right)}\leq C\int_{\Omega}\int_{\Omega}\Phi\left(|D_su(x,y)|\right)d\mu,\quad \forall\, u\in \mathcal{C}_c^1(\Omega)
\end{equation*}
and 
 \begin{equation*}
 \int_{\Omega}\Phi\left(\frac{|u(x)-(u)_\Omega|}{\delta^s_\Omega(x)}\right)dx \leq C\int_{\Omega}\int_{\Omega}\Phi\left(|D_su(x,y)|\right)d\mu,\quad \forall\,u\in \mathcal{C}_c^1(\Omega).
\end{equation*}
\end{rem}
In the next theorem, we consider the  cases when 
 $\Omega$ is in $\mathcal{A}_2$ and   $\mathcal{A}_3$. In these cases, we establish a fractional Hardy-type inequality with a logarithmic correction on the left-hand side of \eqref{1}. 
\begin{thm}\label{Orlicz-Hardy2*-p3} Let $N\ge 1,\,s\in (0,1),$ and $\Omega$ be an open set in $\mathbb{R}^N$. Let $R>0$ and  $\Omega_R:=\{x\in \Omega:\delta_\Omega(x)<R\}
 .$ For any Young function $\Phi\in \D,$ we have 
\begin{enumerate}[(i)]
    \item if $\Omega\in \mathcal{A}_2$ and $sp^+_\Phi=1,$ then for all $u\in \mathcal{C}_c^1(\Omega)$ with $supp(u)\subset\Omega_R$, \begin{equation}\label{3}
 \int_{\Omega_R}\Phi\left(\frac{|u(x)|}{\delta_\Omega^{s}(x)}\right)\frac{dx}{\ln^{p_\Phi^+}\left(\frac{2R}{\delta_\Omega(x)}\right)}\leq C\left(\int_{\Omega}\int_{\Omega}\Phi\left(|D_su(x,y)|\right)d\mu+\int_{\Omega_R}\Phi(|u(x)|)dx\right),
\end{equation}
\item if $\Omega\in \mathcal{A}_3$ and  $sp^-_\Phi=N$ with $N>1,$ then for all  $u\in \mathcal{C}_c^1(\Omega),$ 
\begin{equation}\label{4}
\int_{\Omega}\Phi\left(\frac{|u(x)|}{\delta_\Omega^{s}(x)}\right)\frac{dx}{\ln^{p_\Phi^+}\left(\max\left\{\frac{2R}{\delta_\Omega(x)},\frac{2\delta_\Omega(x)}{R}\right\}\right)}\leq C\left(\int_{\Omega}\int_{\Omega}\Phi\left(|D_su(x,y)|\right)d\mu+\int_{\Omega}\Phi(|u(x)|)dx\right),
\end{equation}
\end{enumerate}
where $C$ is a positive constant independent of $u$.
\end{thm}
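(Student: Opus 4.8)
The plan is to reduce each of the two cases to a model configuration by flattening the relevant part of $\partial\Omega$, to decompose that part of $\Omega$ into dyadic layers on which $\delta_\Omega$ is essentially constant, and, on each layer, to split $u$ into its local mean plus an oscillation: the oscillation is controlled by the fractional Orlicz Poincar\'e--Wirtinger inequality (Proposition~\ref{poincare}), while the means are transported from one scale to the next by a recursion in which the key inequality \eqref{in-2} is iterated with a scale-dependent parameter. This is the Orlicz counterpart of the argument used for $\Phi(t)=t^p$ in \cite{adisahuroy,Dyda2007}.

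\smallskip
\emph{Reductions.} For (i), writing $\Omega=\{(x',x_N):x_N>\gamma(x')\}$ with $\gamma$ Lipschitz, the global bi-Lipschitz map $(x',x_N)\mapsto(x',x_N-\gamma(x'))$ has Jacobian $1$, carries $\delta_\Omega(x)$ to a quantity comparable to the last coordinate of the image, and distorts $|x-y|$ --- hence $D_su$, $d\mu$, and $\int\Phi(|u|)$ --- by constants depending only on $\operatorname{Lip}(\gamma)$ and $p^+_\Phi$; so it suffices to prove \eqref{3} for $\Omega=\mathbb{R}^N_+$, $\delta_\Omega(x)=x_N$, with $u$ supported in $\{0<x_N<R\}$. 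For (ii), $\partial\Omega=\partial K$ is compact and Lipschitz; fix $R_0$ with $\overline K\subset B_{R_0}$ and split the left side of \eqref{4} over $\{\delta_\Omega<\rho_0\}$, the bounded region $\{\rho_0\le\delta_\Omega\le 2R_0^2/\rho_0\}$, and the outer region $\{\delta_\Omega>2R_0^2/\rho_0\}$. On the first, finitely many boundary charts reduce matters to half-space strips exactly as in (i); but now $sp^+_\Phi\ge sp^-_\Phi=N>1$ is \emph{non}-critical, so the unweighted inequality \eqref{Hardy-p3} of \cite{roy2022} already applies there (the logarithmic weight is $\gtrsim 1$ on this set and only helps). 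On the bounded middle region $\delta_\Omega\simeq 1$ and $\Phi(|u|/\delta_\Omega^s)\lesssim\Phi(|u|)$, so this is absorbed by the $\int_\Omega\Phi(|u|)\,dx$ term. On the outer region the Kelvin-type inversion $x\mapsto y:=R_0^2x/|x|^2$ is a conformal diffeomorphism onto a punctured neighbourhood of $0$ in $\mathbb{R}^N\setminus\{0\}$, under which $|x|\simeq\delta_\Omega(x)$ becomes comparable to $R_0^2/|y|$, $\ln(2\delta_\Omega(x)/R)$ becomes the $\mathbb{R}^N\setminus\{0\}$-weight $\ln(c/|y|)$, and $d\mu$, $D_su$, $\int\Phi(|u|)$ transform with controlled conformal weights; the contribution of this region is thus governed by a critical fractional Orlicz Hardy inequality near the puncture of $\mathbb{R}^N\setminus\{0\}$, whose borderline exponent is $sp^-_\Phi=N$. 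Together the near-$\partial K$ and near-$\infty$ contributions produce the weight $\ln^{p^+_\Phi}\!\bigl(\max\{2R/\delta_\Omega,\,2\delta_\Omega/R\}\bigr)$.

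\smallskip
\emph{The dyadic recursion.} In the model half-space strip of case (i), set $L_j=\{x:2^{-j-1}R\le\delta_\Omega(x)<2^{-j}R\}$ and cover $L_j$ (with bounded overlap) by a Whitney family of disjoint dyadic cubes $\{Q_{j,i}\}_i$ of side $\ell_j\simeq 2^{-j}R$. On $Q_{j,i}$, writing $u=(u-(u)_{Q_{j,i}})+(u)_{Q_{j,i}}$ and denoting by $Q_{j,i}^*$ an adjacent Whitney cube of the previous generation, apply \eqref{in-2} with parameter $\lambda_j$ to bound $\Phi(|u|/\delta_\Omega^s)$ by $\lambda_j\,\Phi(|(u)_{Q_{j,i}^*}|/\delta_\Omega^s)$ plus $C(\lambda_j-1)^{-(p^+_\Phi-1)}$ times terms which, after rescaling $Q_{j,i}$ to unit size (under which $p^\pm_\Phi$ are unchanged) and applying Proposition~\ref{poincare}, are dominated by $\iint_{(Q_{j,i}\cup Q_{j,i}^*)^2}\Phi(|D_su|)\,d\mu$. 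Summing over $i$, and using that the generation-$j$ cubes cover $L_j$, that the children of a fixed generation-$(j-1)$ cube occupy at most half its volume, Jensen's inequality, and --- crucially --- the bound $\Phi(2^st)\le 2^{sp^+_\Phi}\Phi(t)=2\,\Phi(t)$ coming from the criticality $sp^+_\Phi=1$, one obtains, with $I_j:=\int_{L_j}\Phi(|u|/\ell_j^s)\,dx$, a recursion
\[
I_j \le \lambda_j\,I_{j-1} + \frac{C}{(\lambda_j-1)^{p^+_\Phi-1}}\,E_j\quad(j\ge 1),\qquad I_0\lesssim\int_\Omega\Phi(|u|)\,dx,
\]
where $\sum_j E_j\lesssim\iint_\Omega\Phi(|D_su|)\,d\mu$. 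Since $sp^+_\Phi=1$ and $s<1$ force $p^+_\Phi=1/s>1$, choosing $\lambda_j=1+a/j$ with $0<a<p^+_\Phi-1$ makes $\Pi_j:=\prod_{l=1}^j\lambda_l\asymp j^a$, so that $\sum_j (j+1)^{-p^+_\Phi}\Pi_j<\infty$ and $\sup_k\Pi_k^{-1}(\lambda_k-1)^{-(p^+_\Phi-1)}\sum_{j\ge k}(j+1)^{-p^+_\Phi}\Pi_j<\infty$; unrolling the recursion and summing against $(j+1)^{-p^+_\Phi}$ then gives $\sum_j(j+1)^{-p^+_\Phi}I_j\lesssim\int_\Omega\Phi(|u|)\,dx+\iint_\Omega\Phi(|D_su|)\,d\mu$, which is \eqref{3} because $\ln(2R/\delta_\Omega)\simeq j+1$ on $L_j$. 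For (ii) one runs the analogous recursion on the Whitney cubes near the puncture after inversion; there the volume ratio of a cube to its parent is $2^{-N}$, so the recursion multiplier becomes $2^{\,sp^+_\Phi-N}\lambda_j$, and \eqref{4} follows by the same bookkeeping once this multiplier is controlled.

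\smallskip
\emph{Main obstacle.} The delicate point throughout is the index bookkeeping: one must check that iterating \eqref{in-2} along the chain costs at most a fixed power of $j+1$, that every change of scale is paid for by $\Phi(2^st)\le 2^{sp^+_\Phi}\Phi(t)$ and nothing worse, and that after the logarithmic weight is inserted the residual sums over layers and cubes actually converge --- which happens exactly at the stated critical thresholds. The hardest instance is case (ii) near infinity: there the multiplier $2^{\,sp^+_\Phi-N}\lambda_j$ exceeds $1$ when $sp^+_\Phi>N=sp^-_\Phi$, so the naive iteration diverges and one must exploit the gap between $p^-_\Phi$ and $p^+_\Phi$ --- for instance, by splitting according to whether $|u|/\delta_\Omega^s$ exceeds the ambient scale and using the two-sided bound $t^{p^+_\Phi}\lesssim\Phi(t)\lesssim t^{p^-_\Phi}$ valid for $t\le 1$ --- which is precisely why the hypothesis is imposed on $p^-_\Phi$ while the robust (though not sharp) logarithmic exponent $p^+_\Phi$ is used. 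A secondary issue is the gluing: the near-$\partial K$, bounded-middle, and near-$\infty$ estimates must be assembled across $\{\delta_\Omega\simeq R\}$ so as to yield a single inequality with the weight $\ln^{p^+_\Phi}(\max\{2R/\delta_\Omega,\,2\delta_\Omega/R\})$.
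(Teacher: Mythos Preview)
Your treatment of part (i) is essentially the paper's: flatten via $(x',x_N)\mapsto(x',x_N-\gamma(x'))$, run the dyadic recursion on the half-space strip using Proposition~\ref{poincare} and the iteration lemma \eqref{in-2} with scale-dependent $\lambda_j$, and sum against the logarithmic weight. This is exactly Lemma~\ref{lem2-p3} followed by the change-of-variable argument.

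Part (ii), however, has a genuine gap. The Kelvin inversion $x\mapsto R_0^2x/|x|^2$ does \emph{not} behave well for the Orlicz Gagliardo energy: under this map $d\mu$ acquires the weight $R_0^{2N}(|y_1||y_2|)^{-N}$ and $|D_su|$ acquires the factor $(|y_1||y_2|)^s/R_0^{2s}$ \emph{inside} $\Phi$, while $\int\Phi(|u|)\,dx$ becomes $\int\Phi(|v|)(R_0/|y|)^{2N}\,dy$. For $\Phi(t)=t^p$ these weights recombine by homogeneity, but for a general Orlicz function they do not, so the reduction to a punctured-space Hardy inequality is not justified. You recognise the second difficulty yourself: after inversion the recursion multiplier is $2^{\,sp^+_\Phi-N}\lambda_j>1$ whenever $p^+_\Phi>p^-_\Phi$, and the proposed magnitude-splitting fix is neither carried out nor clearly viable, since the recursion acts on means $(u)_{Q}$, not on pointwise values.

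The paper avoids both problems by never inverting. It proves a direct analogue of Lemma~\ref{lem2-p3} on $B_R(0)^c$ (Lemma~\ref{ext}) using annular layers $A_k=\{2^kR\le|x|<2^{k+1}R\}$, $k\ge0$, with the recursion running \emph{outward} from $A_0$. The crucial point is that in this direction one compares $\Phi\bigl(2^{-(k+1)s}|(u)_{A_k}|\bigr)$ to $\Phi\bigl(2^{-ks}|(u)_{A_k}|\bigr)$, and since $2^{-s}<1$ the estimate \eqref{H3-p3} gives the factor $2^{-sp^-_\Phi}=2^{-N}$, which \emph{exactly} cancels the volume growth $|A_{k+1}|/|A_k|=2^N$. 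The resulting multiplier is just $\lambda_k\in(1,2^{p^+_\Phi})$, and the same choice $\lambda_k-1\asymp(k+2)^{-1}$ closes the iteration. Thus the hypothesis $sp^-_\Phi=N$ enters precisely at this scaling step, with no need to split cases or exploit any gap between $p^-_\Phi$ and $p^+_\Phi$. Near $\partial K$ the paper simply applies Proposition~\ref{Thm1.2-p3} (the non-critical inequality on the bounded Lipschitz domain $B_{2rR}(0)\cap\Omega$, available since $sp^-_\Phi=N>1$), exactly as you suggested; the two pieces are glued by a single cutoff and Lemma~\ref{lem-ext}.
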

 \begin{rem}
Let $\Phi(t) = t^p$ with $p > 1$. It is known (see for example \cite{Dyda2007}) that the boundary Hardy inequality \eqref{Hardy-p3} fails to hold in the critical cases: $sp = 1$ for $\Omega \in \mathcal{A}_2$ and $sp = N$ for $\Omega \in \mathcal{A}_3$. Thus, the above theorem provides a Hardy-type inequality with logarithmic correction in the critical cases. 
 \end{rem}
 
\smallskip

Next, we study a weighted fractional Orlicz boundary Hardy inequality for $\mathbb{R}^N_+.$ 
For $\Phi(t)=t^p$, $\alpha_1+\alpha_2\in (-1/p,s),$ and $(s-\alpha_1-\alpha_2)p\ne 1$, Dyda and Kijaczko \cite[Theorem 1]{Dyda2022}  proved the following weighted fractional boundary Hardy inequality:
  \begin{equation}\label{weight-p3}
\int_{\mathbb{R}^N_+}\Phi\left(\frac{|u(x)|}{x_N^{s-\alpha_1-\alpha_2}}\right) dx\leq C\int_{\mathbb{R}^N_+}\int_{\mathbb{R}^N_+}\Phi\left(|D_su(x,y)|x_N^{\alpha_1}y_N^{\alpha_2}\right)d\mu,\quad\forall\, u \in \mathcal{C}_c^1(\mathbb{R}^N_+),
\end{equation}
 where $C=C(s,p,\alpha_1,\alpha_2,N)>0.$ The unweighted ($\alpha_1=\alpha_2=0$) generalization of \cite[Theorem 1]{Dyda2022} in the Orlicz setting is established in \cite[$N=1$; Theorem 1.1]{salort2022} and 
  \cite[Theorem 1.5]{roy2022}. In particular, they proved that the above inequality holds under certain sufficient conditions on $s,\Phi$. For the weighted fractional Orlicz-Hardy inequality in $\mathbb{R}^N$ and $\mathbb{R}^N\setminus\{0\}$, we refer to \cite{subha2}.
 
 \smallskip
 
It is known that for $\Phi(t)=t^p$ and $\alpha_1=\alpha_2=0$, the above inequality  fails to hold in the critical
 case $(s-\alpha_1-\alpha_2)p^+_\Phi= 1$ (see, for example, \cite[Appendix]{adisahuroy}). In this case, the following theorem establishes a boundary Hardy inequality with a logarithmic correction on the left-hand side of \eqref{weight-p3}:
\begin{thm}\label{Orlicz-Hardy3-p3} Let $N\ge 1,\,s\in (0,1),\,R>0,\,\alpha_1,\alpha_2\in\mathbb{R}$, and $\Phi\in \D$ be a Young function. If  $(s-\alpha_1-\alpha_2)p^+_\Phi= 1$, then for every $u \in \mathcal{C}_c^1(\mathbb{R}^N_+)$ with $supp(u)\subset \mathbb{R}^{N-1}\times (0,R)$,
  \begin{equation*}
\int_{\mathbb{R}^N_+}\Phi\left(\frac{|u(x)|}{x_N^{s-\alpha_1-\alpha_2}}\right)\frac{dx}{\ln^{p^+_\Phi}\left(\frac{2R}{x_N}\right)} \leq C\int_{\mathbb{R}^N_+}\int_{\mathbb{R}^N_+}\Phi\left(|D_su(x,y)|x_N^{\alpha_1}y_N^{\alpha_2}\right)d\mu,
\end{equation*}
where $C$ is a positive constant independent of $u$.
\end{thm}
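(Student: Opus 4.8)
The plan is to reduce the weighted inequality on $\mathbb{R}^N_+$ to the unweighted logarithmic inequality on a bounded Lipschitz domain, namely to a local version of Theorem \ref{Orlicz-Hardy2-p3}, by a change of variables that absorbs the weights $x_N^{\alpha_1}y_N^{\alpha_2}$. First I would observe that $\mathbb{R}^N_+$ can be split into the dyadic slabs $S_k = \mathbb{R}^{N-1}\times[2^{-k-1}R, 2^{-k}R)$ for $k\ge 0$, and that on each slab the weights $x_N^{\alpha_1}$ and $y_N^{\alpha_2}$ are comparable to the constant $(2^{-k}R)^{\alpha_1}$, respectively $(2^{-k}R)^{\alpha_2}$, up to factors depending only on $\alpha_1,\alpha_2$; since $\Phi$ is an Orlicz function with finite upper index $p_\Phi^+$, multiplying the argument of $\Phi$ by a bounded constant changes $\Phi$ only by a bounded factor (use the doubling-type estimate coming from $p_\Phi^+<\infty$, e.g.\ $\Phi(\lambda t)\le \lambda^{p_\Phi^+}\Phi(t)$ for $\lambda\ge 1$ and $\Phi(\lambda t)\ge \lambda^{p_\Phi^+}\Phi(t)$ for $\lambda\le 1$, and symmetrically with $p_\Phi^-$). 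The key point is that $(s-\alpha_1-\alpha_2)p_\Phi^+=1$ is exactly the scaling-critical relation, so after rescaling each slab to unit size the exponent bookkeeping closes.

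The cleaner route, which I would actually carry out, is a global substitution rather than a slab decomposition: set $v(x) = x_N^{-(\alpha_1+\alpha_2)}u(x)$ is not quite right because the weights sit on the difference quotient asymmetrically in $x_N$ and $y_N$; instead I would use the standard trick of comparing $|D_su(x,y)|x_N^{\alpha_1}y_N^{\alpha_2}$ with the difference quotient of $w(x):=x_N^{-\beta}u(x)$ for a suitable $\beta$ plus an error term. Concretely, writing $\beta = \alpha_1+\alpha_2$ and $w = x_N^{-\beta}u$, one has
\begin{equation*}
  D_s u(x,y)\, x_N^{\alpha_1} y_N^{\alpha_2} = x_N^{\alpha_1+\beta} y_N^{\alpha_2}\, D_s w(x,y) + x_N^{\alpha_1} y_N^{\alpha_2}\,\frac{x_N^\beta - y_N^\beta}{|x-y|^s}\, w(y),
\end{equation*}
and on the support region $\mathbb{R}^{N-1}\times(0,R)$ the first factor is controlled while the second term, after using $|x_N^\beta-y_N^\beta|\le C|x-y|^{\min(1,\beta)}(\ldots)$ and the critical exponent relation, is absorbed into the $L^\Phi$-type term on the right by the convexity splitting inequality \eqref{in-2}. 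This converts the statement into the unweighted critical Orlicz boundary Hardy inequality with logarithmic correction on $\mathbb{R}^{N-1}\times(0,R)$, i.e.\ essentially inequality \eqref{3} of Theorem \ref{Orlicz-Hardy2*-p3}(i) for the Lipschitz domain $\Omega=\mathbb{R}^N_+$ (noting that for $x$ near the boundary of $\mathbb{R}^N_+$ one has $\delta_\Omega(x)=x_N$), whose proof we are assuming.

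I expect the main obstacle to be the rigorous handling of the error term generated by moving the weights, i.e.\ showing that
\begin{equation*}
  \int_{\mathbb{R}^N_+}\int_{\mathbb{R}^N_+}\Phi\!\left(x_N^{\alpha_1} y_N^{\alpha_2}\,\frac{|x_N^\beta - y_N^\beta|}{|x-y|^s}\,|w(y)|\right) d\mu \;\le\; C\int_{\mathbb{R}^N_+}\int_{\mathbb{R}^N_+}\Phi\!\left(|D_su(x,y)|x_N^{\alpha_1}y_N^{\alpha_2}\right) d\mu,
\end{equation*}
which requires splitting the $x$-integral into the near region $|x-y|\lesssim y_N$, where $|x_N^\beta-y_N^\beta|\lesssim y_N^{\beta-1}|x-y|$ and the kernel $|x-y|^{1-s-N}$ is integrable since $s<1$, and the far region $|x-y|\gtrsim y_N$, where one uses $|x_N^\beta - y_N^\beta|\lesssim \max(x_N,y_N)^\beta$ together with $(s-\beta)p_\Phi^+=1$ to get a (borderline) convergent $\int \mathrm{d}x_N/(x_N \ln^{p_\Phi^+}(\ldots))$-type bound; it is precisely here that the logarithmic weight on the left must appear and cannot be removed. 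Care is also needed because $\beta=\alpha_1+\alpha_2$ can be negative (so $x_N^{-\beta}$ blows up at the boundary) — but since $u\in\mathcal{C}_c^1$ vanishes near $\partial\mathbb{R}^N_+$ this causes no integrability issue, and the Orlicz index inequalities control all the constant factors uniformly. Once the error term is absorbed, the result follows by combining with the assumed unweighted critical estimate and the splitting inequality \eqref{in-2}.
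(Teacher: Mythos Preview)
Your proposal has two genuine gaps that prevent it from closing.

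First, the substitution $w(x)=x_N^{-\beta}u(x)$ with $\beta=\alpha_1+\alpha_2$ does \emph{not} reduce the right-hand side to the unweighted seminorm. From your own identity,
\[
D_su(x,y)\,x_N^{\alpha_1}y_N^{\alpha_2}=x_N^{\alpha_1+\beta}y_N^{\alpha_2}\,D_sw(x,y)+(\text{error}),
\]
so the main term carries the weight $x_N^{2\alpha_1+\alpha_2}y_N^{\alpha_2}$, not $1$. Likewise on the left, $|u|/x_N^{\tau}=|w|\,x_N^{2\beta-s}$, which is not $|w|/x_N^{s}$ unless $\beta=0$. There is no single power substitution that simultaneously strips both weights, because the right-hand side is asymmetric in $x_N,y_N$. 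The paper avoids this entirely: Lemmas~\ref{lemm-1} and~\ref{Lem-avg} are already proved for arbitrary $\alpha_1,\alpha_2$ (on each dyadic cube $A_k^i$ one has $x_N\asymp y_N\asymp 2^kT$, so the weights become harmless constants), and Lemma~\ref{lem2-p3} then gives the full weighted logarithmic estimate directly.

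Second, and more seriously, even if you could reduce to the unweighted case, inequality~\eqref{3} of Theorem~\ref{Orlicz-Hardy2*-p3}(i) has the extra term $\int_{\Omega_R}\Phi(|u|)\,dx$ on the right, whereas the target inequality in Theorem~\ref{Orlicz-Hardy3-p3} does not. Eliminating that term is precisely the content of Theorem~\ref{Orlicz-Hardy3-p3}, and your outline never addresses it. The paper's mechanism is specific: in the proof of Lemma~\ref{lem2-p3} the extra term arises as $Ca_{-1}$ (a sum of $\Phi$ of cube averages on the top layer); in Theorem~\ref{Orlicz-Hardy3-p3} one runs the same recursion~\eqref{STAR} one step further, up to $k=-1$, producing $a_0$ instead, and then observes that the support condition $\mathrm{supp}(u)\subset\mathbb{R}^{N-1}\times(0,R)$ forces $(u)_{A_0^i}=0$ for every $i$, hence $a_0=0$. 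Your error-term discussion (``borderline convergent'') cannot substitute for this, since in the critical case the relevant integral genuinely diverges without the logarithm, and the splitting inequality~\eqref{in-2} gives no smallness to absorb anything into the left-hand side.
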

\begin{rem} It follows from  $(i)$ of Theorem \ref{Orlicz-Hardy2*-p3} that the inequality \eqref{3} holds for $\Omega=\mathbb{R}^N_+\in \mathcal{A}_2$ in the case $sp^+_\Phi=1$. Thus, the above theorem generalizes $(i)$ of Theorem \ref{Orlicz-Hardy2*-p3} for $\Omega = \mathbb{R}^N_+$. For $\Phi(t)=t^p$ with $p>1,$ the above theorem is a particular case of Theorem 6 in \cite{adisahuroy}.
\end{rem}

\smallskip

 The article is structured as follows: In section \ref{preli-p3}, we recall some properties of Young functions and Orlicz spaces and prove some essential lemmas frequently used in this article. In Section \ref{flatbunndary-p3}, we establish a fractional boundary Hardy-type inequality in a domain with flat boundaries and the complement of a ball.
 We present the proofs for  Theorem \ref{Orlicz-Hardy2-p3}, Theorem \ref{Orlicz-Hardy1*-p3}, Theorem \ref{Orlicz-Hardy2*-p3}, and Theorem \ref{Orlicz-Hardy3-p3} in Section \ref{orlicz-boundary-p3}. 
 
\section{preliminaries}\label{preli-p3}
In this section, we recall or prove some essential results we need to prove the main theorems of this article. Throughout this article, we shall use the following notations: 
\begin{itemize}
\item A point $x\in \mathbb{R}^N$ is represented as $x=(x^\prime,x_N)$ where $x^\prime=(x_1,x_2,\dots,x_{N-1})\in \mathbb{R}^{N-1}.$
    \item  $\mathcal{C}_c^1(\Omega)$ denotes the set of continuously differentiable functions with compact support.
    \item  For any $f,g:\Omega(\subset\mathbb{R}^N)\rightarrow \mathbb{R}$ we
denote $f\asymp g$ if there exist constants $C_1,C_2>0$ such that $C_1f(x)\leq g(x)\leq C_2f(x)$ for all $x\in \Omega.$ 
\end{itemize}

\subsection{Properties of Young functions} We recall the basic notions and some of the properties concerning Young functions satisfying the $\D$-condition. We refer to the books \cite{adams2003,Krasn1961} for a comprehensive
introduction to the subject.

\begin{lem}\cite{bonder2019}
    Let $\Phi$ be a Young function satisfying the $\D$-condition. Then, for every $a, b \geq 0,$ 
\begin{align}
 \Phi(a+b)&\leq 2^{p^+_\Phi }\left(\Phi(a)+\Phi(b)\right),\label{H2-p3}\\
     \Phi(ab)&\leq \max\{a^{p^-_\Phi },a^{p^+_\Phi }\}\Phi(b).\label{H3-p3}
\end{align}
\end{lem}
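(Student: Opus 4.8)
The final statement is the Lemma from \cite{bonder2019}, with inequalities \eqref{H2-p3} and \eqref{H3-p3}. Let me sketch a proof.

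\smallskip

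The plan is to derive both inequalities directly from the definition of the indices $p_\Phi^-$ and $p_\Phi^+$, exploiting the representation $\Phi(t)=\int_0^t\varphi(s)\,ds$ together with the monotonicity of $\varphi$.

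\smallskip

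First I would establish the scaling estimate \eqref{H3-p3}, as inequality \eqref{H2-p3} will follow from it. The key observation is that the function $g(\lambda):=\Phi(\lambda t)/\Phi(t)$ (for fixed $t>0$) can be analyzed via its logarithmic derivative: differentiating $\ln\Phi(\lambda t)$ in $\lambda$ gives $t\varphi(\lambda t)/\Phi(\lambda t)$, which by the definition of the indices lies in $[p_\Phi^-, p_\Phi^+]$. Hence for $\lambda\ge 1$, integrating the differential inequality $p_\Phi^-/\lambda \le \frac{d}{d\lambda}\ln\Phi(\lambda t)\le p_\Phi^+/\lambda$ from $1$ to $\lambda$ yields $\lambda^{p_\Phi^-}\le \Phi(\lambda t)/\Phi(t)\le \lambda^{p_\Phi^+}$, and for $0<\lambda\le 1$ the inequalities reverse, giving $\lambda^{p_\Phi^+}\le \Phi(\lambda t)/\Phi(t)\le \lambda^{p_\Phi^-}$. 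Combining both ranges, $\Phi(\lambda t)\le \max\{\lambda^{p_\Phi^-},\lambda^{p_\Phi^+}\}\,\Phi(t)$ for all $\lambda\ge 0$. Setting $\lambda=a$ and $t=b$ gives precisely \eqref{H3-p3}. One should be slightly careful at $t=0$ and where $\Phi(t)=0$; but $\Phi(t)>0$ for $t>0$ since $\varphi(t)>0$ there, and the case $b=0$ is trivial, so these are only cosmetic.

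\smallskip

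Next, for \eqref{H2-p3}, I would use the elementary bound $a+b\le 2\max\{a,b\}$ together with monotonicity of $\Phi$ to get $\Phi(a+b)\le \Phi(2\max\{a,b\})$. Applying \eqref{H3-p3} with $\lambda=2$ gives $\Phi(2\max\{a,b\})\le 2^{p_\Phi^+}\Phi(\max\{a,b\})$ (note $\max\{2^{p_\Phi^-},2^{p_\Phi^+}\}=2^{p_\Phi^+}$ since $p_\Phi^-\le p_\Phi^+$), and finally $\Phi(\max\{a,b\})\le \Phi(a)+\Phi(b)$ since one of the two terms equals $\Phi(\max\{a,b\})$ and the other is nonnegative. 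Chaining these yields \eqref{H2-p3}.

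\smallskip

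The main (minor) obstacle is justifying the differential-inequality argument rigorously: one needs $\lambda\mapsto \Phi(\lambda t)$ to be locally absolutely continuous, which follows from the integral representation, and one must handle the fact that $\varphi$ is only right-continuous (so the derivative exists a.e., which is enough for the fundamental theorem of calculus applied to the absolutely continuous function $\ln\Phi(\lambda t)$). Alternatively, one can avoid differentiation entirely and argue directly: for $\lambda\ge 1$, write $\Phi(\lambda t)-\Phi(t)=\int_t^{\lambda t}\varphi(s)\,ds$ and bound $\varphi(s)\le p_\Phi^+\Phi(s)/s$, then use a substitution/Gronwall-type comparison; this is the cleanest route and the one I would ultimately write up. Since the paper cites \cite{bonder2019} for this lemma, a brief indication of the above argument suffices.
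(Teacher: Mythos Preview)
Your proof is correct. The paper itself does not prove this lemma at all; it simply cites \cite{bonder2019} and moves on. So you have done more than the paper: you supply a clean, self-contained argument via the logarithmic-derivative bound $p_\Phi^-/\lambda\le \frac{d}{d\lambda}\ln\Phi(\lambda t)\le p_\Phi^+/\lambda$ and integration, which is the standard route to the scaling estimate \eqref{H3-p3}, and then derive \eqref{H2-p3} from it via $a+b\le 2\max\{a,b\}$. Your remarks about absolute continuity and the a.e.\ differentiability of $\lambda\mapsto\Phi(\lambda t)$ are appropriate and handle the only technical subtlety; the alternative Gronwall-type argument you mention also works and is equivalent.
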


\begin{exmp}\label{example-p3} 
The following table lists the values of $p_\Phi^-$ and $p^+_\Phi$ for various Young functions. For the computations, we refer to \cite[Example 5.5]{subha2}.
 \smallskip
 
    \begin{center}
    \setlength{\arrayrulewidth}{0.5mm}
\setlength{\tabcolsep}{18pt}
\renewcommand{\arraystretch}{1.5}

\begin{tabular}{p{4cm}p{2cm}p{2cm}}
\hline

$\Phi(t)$ & $p^-_\Phi$  & $p^+_\Phi$  \\
\hline

 $t^p+t^q;\; q> p>1$  &$p$ & $q$
  \vspace{.3cm}\\ 
$\max\{t^p,t^q\};\; q> p>1$ &
 $p$ & $q$  \vspace{.3cm} \\

$t^p\ln (1+t);\;p\ge 1$ &  $p$ & $p+1$  \vspace{.3cm} \\

$(1+t)\ln(1+t)-t$  &$1$ & $2$  \vspace{.3cm} \\  

\hline
\end{tabular}
\vspace{.5cm}
\end{center}
For more examples of Young functions and their corresponding values of $p_\Phi^-$ and $p^+_\Phi$, see \cite{Mihai2012,salort-jde} and the references therein.
\end{exmp}

Next, we recall an important inequality for a Young function proved in \cite[Lemma 3.1]{subha2}, which plays a crucial role in the proof of all our theorems. For $\Phi(t)=t^p,$ the following lemma is established in \cite{adisahuroy, Nguyen2018} and is used to prove the fractional Caffarelli-Kohn-Nirenberg inequalities.
\begin{lem}\label{Lem-H-p3}
    Let $\Phi$ be a Young function that satisfies the $\D$-condition and $\Lambda>1$.
Then there exists $C=C(\Phi,\Lambda)>0$ such that 
     \begin{equation*}
 \Phi(a+b)\leq \lambda\Phi(a)+\frac{C}{(\lambda-1)^{p^+_\Phi-1}}\Phi(b),\;\;\;\forall\,a,b\in [0,\infty),\,\forall\,\lambda\in (1,\Lambda]. 
  \end{equation*}
  \end{lem}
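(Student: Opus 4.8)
The plan is to combine the convexity of $\Phi$ with the scaling estimate \eqref{H3-p3}. Write $p:=p^+_\Phi$. The case $p=1$ is immediate: then $\Phi(t)=\int_0^t\varphi(s)\,ds\le t\varphi(t)\le p\,\Phi(t)=\Phi(t)$ forces $\Phi(t)=t\varphi(t)$, so $\varphi$ is constant and $\Phi$ is linear, whence the inequality holds with $C=1$. So assume $p>1$ from now on.

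Fix $\lambda\in(1,\Lambda]$ and introduce the calibrated weight
\[
\theta:=\lambda^{-1/(p-1)}\in(0,1),\qquad\text{so that}\qquad\theta^{-(p-1)}=\lambda.
\]
For $a,b\ge0$ write $a+b=\theta\cdot\frac{a}{\theta}+(1-\theta)\cdot\frac{b}{1-\theta}$; since $\Phi$ is convex,
\[
\Phi(a+b)\le\theta\,\Phi\!\left(\frac{a}{\theta}\right)+(1-\theta)\,\Phi\!\left(\frac{b}{1-\theta}\right).
\]
Because the dilation factors $1/\theta$ and $1/(1-\theta)$ are both $\ge1$, inequality \eqref{H3-p3} applies with the maximum realized at the exponent $p^+_\Phi$, giving $\Phi(a/\theta)\le\theta^{-p}\Phi(a)$ and $\Phi(b/(1-\theta))\le(1-\theta)^{-p}\Phi(b)$. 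Substituting these back and using $\theta^{-(p-1)}=\lambda$,
\[
\Phi(a+b)\le\theta^{1-p}\Phi(a)+(1-\theta)^{1-p}\Phi(b)=\lambda\,\Phi(a)+(1-\theta)^{-(p-1)}\Phi(b).
\]

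It then remains to absorb the factor $(1-\theta)^{-(p-1)}$ into $C(\Phi,\Lambda)\,(\lambda-1)^{-(p-1)}$. For this I would observe that $\lambda\mapsto 1-\lambda^{-1/(p-1)}$ is concave on $[1,\Lambda]$ and vanishes at $\lambda=1$, hence lies above the chord joining its endpoints: $1-\theta\ge c\,(\lambda-1)$ for all $\lambda\in(1,\Lambda]$, with $c:=\big(1-\Lambda^{-1/(p-1)}\big)/(\Lambda-1)>0$ depending only on $p$ and $\Lambda$ (the mean value theorem gives the same with another admissible constant). Consequently $(1-\theta)^{-(p-1)}\le c^{-(p-1)}(\lambda-1)^{-(p-1)}$, and taking $C:=c^{-(p-1)}$ — which depends only on $p^+_\Phi$ and $\Lambda$, hence only on $\Phi$ and $\Lambda$ — completes the proof.

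The computation is short; the only real idea is the calibrated choice $\theta=\lambda^{-1/(p-1)}$, tuned so that the coefficient of $\Phi(a)$ comes out exactly $\lambda$, while the extra factor $\theta$ supplied by convexity lowers the scaling exponent from $p$ to $p-1$ — which is precisely what produces the denominator $(\lambda-1)^{p-1}$ rather than $(\lambda-1)^{p}$. The only points needing care, and the closest thing to an obstacle, are respecting the direction of \eqref{H3-p3} (both dilation factors must be $\ge1$ so that the exponent $p^+_\Phi$ appears, which is exactly what forces $\theta<1$) and the elementary near-$\lambda=1$ lower bound $1-\lambda^{-1/(p-1)}\ge c\,(\lambda-1)$.
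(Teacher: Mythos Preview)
Your argument is correct. The paper itself does not prove this lemma; it merely quotes it from \cite[Lemma 3.1]{subha2}, so there is no ``paper's own proof'' to compare against. Your proof is self-contained and works exactly as written: convexity plus the scaling bound \eqref{H3-p3} with the calibrated splitting $\theta=\lambda^{-1/(p-1)}$ gives the coefficient $\lambda$ in front of $\Phi(a)$, and the concavity of $\lambda\mapsto 1-\lambda^{-1/(p-1)}$ on $[1,\Lambda]$ yields the chord bound $1-\theta\ge c(\lambda-1)$, which produces the required $(\lambda-1)^{-(p^+_\Phi-1)}$ factor.

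One small remark: the case $p^+_\Phi=1$ is actually vacuous under the paper's Definition~\ref{defn-orlicz}, since your deduction that $\varphi$ is constant contradicts the requirement $\lim_{t\to\infty}\varphi(t)=\infty$. So you may simply note that $p^+_\Phi>1$ always holds here and drop that paragraph.
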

\subsection{Some function spaces:}  Let $\Omega$ be an open set in $\mathbb{R}^N,$ and $\Phi\in \D$ be a Young function. 
\begin{enumerate}[(i)]
\item  \textbf{\underline{Orlicz spaces}:}
The  Orlicz space associated with $\Phi$ is  defined as 
$$ L^{\Phi}(\Omega)=\left\{u:\Omega\rightarrow \mathbb{R}\; \text{measurable}\; :\int_{\Omega}\Phi\left(|u(x)|\right)dx<\infty \right\}.$$ 
 The space $L^{\Phi}(\Omega)$  is a Banach space with respect to the following \textit{Luxemburg norm:}
\begin{equation*}
   \|u\|_{L^{\Phi}(\Omega)}=\inf\left\{ \lambda>0:\int_{\Omega}\Phi\left( \frac{|u(x)|}{\lambda}\right) dx\leq 1\right\}.
\end{equation*}

\item  \textbf{\underline{Fractional Orlicz-Sobolev spaces}:} Let $s\in (0,1)$. Then, the fractional Orlicz-Sobolev space is defined as
$$ W^{s,\Phi}(\Omega)=\left\{u\in L^\Phi(\Omega) :\; I_{\Phi,\Omega}(u)<\infty  \right\}, \quad  I_{\Phi,\Omega}(u):=\int_{\Omega}\int_{\Omega}\Phi\left( |D_su(x,y)|\right)d\mu .$$
The space $ W^{s,\Phi}(\Omega)$ is a Banach space with the norm $\|u\|_{W^{s,\Phi}(\Omega)}:=\|u\|_{L^\Phi(\Omega)}+[u]_{W^{s,\Phi}(\Omega)}$, where
\begin{equation*}
    [u]_{W^{s,\Phi}(\Omega)}=\inf\left\{ \lambda>0: I_{\Phi,\Omega}\left(\frac{u}{\lambda}\right)\leq 1\right\}.
\end{equation*}
\end{enumerate}
For details on Orlicz and fractional Orlicz-Sobolev spaces, we refer to \cite{adams2003, bonder2019, Krasn1961}.
\smallskip

Now, we recall the {\it{fractional Poincar\'e-Wirtinger inequality}} for a Young function $\Phi\in \D$ on  $\Omega_\lambda:=\{\lambda x:x\in \Omega\}$, where  $\Omega$ is a bounded open set in $\mathbb{R}^N$ and $\lambda>0$. 
\begin{prop}\cite[Proposition 2.3]{subha2}\label{poincare}
Let $N\geq 1,\,s\in (0,1)$, $\lambda>0$, and $\Omega$ be a bounded open set in $\mathbb{R}^N$. Then for any Young function $\Phi\in \D$, there exists $C=C(s,N,\Omega,\Phi)>0$  so that
\begin{equation*}
  \int_{\Omega_\lambda}\Phi(|u(x)-(u)_{\Omega_\lambda}|)dx\leq C\int_{\Omega_\lambda}\int_{\Omega_\lambda}\Phi\left(\lambda^s|D_su(x,y)|\right)d\mu,\quad \forall\,u\in W^{s,\Phi}(\Omega_\lambda).
\end{equation*}
\end{prop}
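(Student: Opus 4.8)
The plan is to first prove the inequality on the fixed domain $\Omega$ (i.e.\ the case $\lambda=1$) and then transfer it to $\Omega_\lambda$ by a change of variables. For $\lambda=1$, the statement reads
$$
\int_{\Omega}\Phi\bigl(|u(x)-(u)_{\Omega}|\bigr)\,dx\leq C\int_{\Omega}\int_{\Omega}\Phi\bigl(|D_su(x,y)|\bigr)\,d\mu,\qquad\forall\,u\in W^{s,\Phi}(\Omega),
$$
and I would establish it by a standard compactness–contradiction argument. Suppose the inequality fails with the constant $C=n$; then there exist $u_n\in W^{s,\Phi}(\Omega)$ with $\int_\Omega\Phi(|u_n-(u_n)_\Omega|)\,dx=1$ while $I_{\Phi,\Omega}(u_n)\le 1/n\to 0$. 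Set $v_n:=u_n-(u_n)_\Omega$, so $(v_n)_\Omega=0$, $\int_\Omega\Phi(|v_n|)\,dx=1$, and $[v_n]_{W^{s,\Phi}(\Omega)}\to 0$. Hence $(v_n)$ is bounded in $W^{s,\Phi}(\Omega)$; by the compact embedding $W^{s,\Phi}(\Omega)\hookrightarrow\hookrightarrow L^{\Phi}(\Omega)$ for bounded (Lipschitz, or at least extension) domains, a subsequence converges in $L^\Phi(\Omega)$ to some $v$ with $\int_\Omega\Phi(|v|)\,dx=1$ (using that $\Phi\in\triangle_2$, so convergence in Luxemburg norm is equivalent to modular convergence) and $(v)_\Omega=0$. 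On the other hand, the Gagliardo seminorm is lower semicontinuous under $L^\Phi$-convergence (via Fatou's lemma applied to $\Phi(|D_s v_n(x,y)|)\to\Phi(|D_s v(x,y)|)$ along a further a.e.-convergent subsequence of the difference quotients), forcing $I_{\Phi,\Omega}(v)=0$. This means $v(x)=v(y)$ for a.e.\ $(x,y)$, so $v$ is constant on the connected set $\Omega$; combined with $(v)_\Omega=0$ we get $v\equiv 0$, contradicting $\int_\Omega\Phi(|v|)\,dx=1$.

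The scaling step is routine but must be done carefully with the Orlicz modular. For $\lambda>0$ and $u\in W^{s,\Phi}(\Omega_\lambda)$, put $w(x):=u(\lambda x)$ for $x\in\Omega$. Then $(w)_\Omega=(u)_{\Omega_\lambda}$, and changing variables $x\mapsto\lambda x$ gives $\int_{\Omega_\lambda}\Phi(|u(x)-(u)_{\Omega_\lambda}|)\,dx=\lambda^N\int_\Omega\Phi(|w(x)-(w)_\Omega|)\,dx$. Similarly, since $D_s w(x,y)=\lambda^s\,D_s u(\lambda x,\lambda y)$ and the measure $d\mu=|x-y|^{-N}\,dx\,dy$ transforms as $\lambda^{-N}\,d\mu$ under $(x,y)\mapsto(\lambda x,\lambda y)$ after accounting for the two $dx$-type factors, one obtains
$$
\int_{\Omega_\lambda}\int_{\Omega_\lambda}\Phi\bigl(\lambda^s|D_su(x,y)|\bigr)\,d\mu
=\lambda^N\int_{\Omega}\int_{\Omega}\Phi\bigl(|D_sw(x,y)|\bigr)\,d\mu.
$$
Applying the $\lambda=1$ inequality to $w$ and multiplying through by $\lambda^N$ yields exactly the claimed estimate, with the same constant $C=C(s,N,\Omega,\Phi)$, which does not depend on $\lambda$.

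The main obstacle is the compact embedding $W^{s,\Phi}(\Omega)\hookrightarrow\hookrightarrow L^{\Phi}(\Omega)$ for bounded (Lipschitz) domains in the Orlicz setting, which underpins the compactness step; this is where the $\triangle_2$-condition and the indices $p_\Phi^-,p_\Phi^+$ enter, allowing one to dominate $\Phi$ above and below by powers (via \eqref{H3-p3}) and reduce to the classical fractional Sobolev compact embedding, or to invoke the corresponding result from the fractional Orlicz–Sobolev literature. A secondary technical point is justifying the lower semicontinuity of $I_{\Phi,\Omega}$ and the passage from modular smallness to the desired conclusion; both are handled cleanly using $\Phi\in\triangle_2$ together with Fatou's lemma.
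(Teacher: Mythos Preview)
The paper does not supply a proof of this proposition at all; it is quoted verbatim from \cite[Proposition 2.3]{subha2} and used as a black box. So there is no ``paper's own proof'' to compare against, and I will simply assess your argument.

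Your scaling step (reducing to $\lambda=1$ via $w(x)=u(\lambda x)$) is correct and is indeed the right way to obtain a constant independent of $\lambda$. The problem lies in your treatment of the base case $\lambda=1$. The compactness--contradiction argument has two genuine gaps relative to the stated hypotheses. First, the proposition assumes only that $\Omega$ is a \emph{bounded open set}: no Lipschitz regularity, no extension property, and---crucially---no connectedness. Your argument uses both: the compact embedding $W^{s,\Phi}(\Omega)\hookrightarrow\hookrightarrow L^{\Phi}(\Omega)$ is not available for an arbitrary bounded open set, and the step ``$v$ is constant on the connected set $\Omega$'' simply fails if $\Omega$ is disconnected (a function that is constant on each component with zero global average need not vanish). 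So as written the proof does not establish the proposition in the generality claimed.

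There is, however, a direct elementary argument that works for \emph{any} bounded open set and avoids compactness entirely. By Jensen's inequality,
\[
\Phi\bigl(|u(x)-(u)_\Omega|\bigr)
=\Phi\!\left(\Bigl|\tfrac{1}{|\Omega|}\int_\Omega (u(x)-u(y))\,dy\Bigr|\right)
\le \frac{1}{|\Omega|}\int_\Omega \Phi\bigl(|u(x)-u(y)|\bigr)\,dy .
\]
Integrating in $x$ and writing $d:=\mathrm{diam}(\Omega)$, one has $|x-y|\le d$ on $\Omega\times\Omega$, hence $|u(x)-u(y)|=|D_su(x,y)|\,|x-y|^s\le d^s|D_su(x,y)|$ and $dx\,dy=|x-y|^N\,d\mu\le d^N\,d\mu$. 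Using \eqref{H3-p3} then gives
\[
\int_\Omega \Phi\bigl(|u(x)-(u)_\Omega|\bigr)\,dx
\le \frac{d^N}{|\Omega|}\max\{d^{sp_\Phi^-},d^{sp_\Phi^+}\}\int_\Omega\int_\Omega \Phi\bigl(|D_su(x,y)|\bigr)\,d\mu,
\]
which is the $\lambda=1$ case with an explicit constant depending only on $s,N,\Omega,\Phi$. Combined with your (correct) scaling step, this yields the full proposition without any regularity or connectedness assumptions on $\Omega$.
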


The following lemma proves that if $u\in W^{s,\Phi}(\Omega)$ and $\xi$ is a test function, then $\xi u\in W^{s,\Phi}(\Omega)$. This lemma plays an important role in the establishment of Theorem \ref{Orlicz-Hardy2-p3}, Theorem \ref{Orlicz-Hardy1*-p3}, and Theorem \ref{Orlicz-Hardy2*-p3}. For $\Phi(t)=t^p,$  this lemma is established in \cite[Lemma 5.3]{Hitchhiker2012}. We adapt their proof to our case. 

\begin{lem}\label{lem-ext}
    Let $\Omega$ be an open subset of $\mathbb{R}^N,\,N\ge 1$, $s\in (0,1),$ and $\xi\in \mathcal{C}^{0,1}(\Omega)$ such that $0\le \xi\le 1$. Then for any  Young function $\Phi\in \D$, there exists $C=C(s,N,\Phi,\Omega)>0$ such that
    \begin{multline*}
\int_\Omega\Phi(|(\xi u)(x)|)dx+ \int_{\Omega}\int_{\Omega}\Phi\left(|D_s(\xi u)(x,y)|\right)d\mu\leq C\int_\Omega\Phi(|u(x)|)dx\\+C\int_{\Omega}\int_{\Omega}\Phi\left(|D_su(x,y)|\right)d\mu,\quad \forall\,u\in W^{s,\Phi}(\Omega).
    \end{multline*}
    \end{lem}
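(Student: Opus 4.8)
\textbf{Proof plan for Lemma \ref{lem-ext}.}

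The plan is to split the bound into two pieces: control of $\int_\Omega \Phi(|(\xi u)(x)|)\,dx$ and control of the Gagliardo-type seminorm $\int_\Omega\int_\Omega \Phi(|D_s(\xi u)(x,y)|)\,d\mu$. The first piece is immediate since $0\le \xi\le 1$ forces $|(\xi u)(x)|\le |u(x)|$, and $\Phi$ is non-decreasing, so $\int_\Omega \Phi(|(\xi u)(x)|)\,dx\le \int_\Omega \Phi(|u(x)|)\,dx$. The substance is therefore entirely in the seminorm term, and the key algebraic identity is the standard splitting
\[
(\xi u)(x)-(\xi u)(y)=\xi(x)\bigl(u(x)-u(y)\bigr)+u(y)\bigl(\xi(x)-\xi(y)\bigr),
\]
so that $|D_s(\xi u)(x,y)|\le |\xi(x)|\,|D_su(x,y)|+|u(y)|\,|D_s\xi(x,y)|\le |D_su(x,y)|+|u(y)|\,|D_s\xi(x,y)|$, again using $0\le\xi\le 1$. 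Applying the Orlicz quasi-triangle inequality \eqref{H2-p3}, namely $\Phi(a+b)\le 2^{p^+_\Phi}(\Phi(a)+\Phi(b))$, reduces the problem to estimating
\[
\int_\Omega\int_\Omega \Phi\bigl(|u(y)|\,|D_s\xi(x,y)|\bigr)\,d\mu,
\]
since $\int_\Omega\int_\Omega \Phi(|D_su(x,y)|)\,d\mu$ is already on the right-hand side.

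For the remaining double integral I would exploit that $\xi$ is Lipschitz with, say, constant $L$, so $|\xi(x)-\xi(y)|\le \min\{L|x-y|,\,1\}$ (the bound $1$ coming from $0\le\xi\le 1$). Fix $y$ and integrate in $x$: split the inner region into $\{|x-y|<1\}$ and $\{|x-y|\ge 1\}$. On the near region use $|D_s\xi(x,y)|\le L|x-y|^{1-s}$ and the bound $\Phi(|u(y)|\,t)\le \max\{t^{p^-_\Phi},t^{p^+_\Phi}\}\Phi(|u(y)|)$ from \eqref{H3-p3}; since $|x-y|^{1-s}\le 1$ there, the relevant exponent gives a factor $|x-y|^{(1-s)p^-_\Phi}$ (or $L^{p^+_\Phi}|x-y|^{(1-s)p^+_\Phi}$ depending on whether $L|x-y|^{1-s}\gtrless 1$, but in any case bounded by a constant multiple of $|x-y|^{\min\{(1-s)p^-_\Phi,(1-s)p^+_\Phi\}}$ up to the $L$-powers), and after dividing by $|x-y|^N$ the integrand is $|x-y|^{-N+(1-s)p^-_\Phi}$, which is integrable near the diagonal because $(1-s)p^-_\Phi>0$. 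On the far region $|D_s\xi(x,y)|\le |x-y|^{-s}$ and $\Phi(|u(y)||D_s\xi(x,y)|)\le \Phi(|u(y)|)$ (as $|D_s\xi|\le 1$ there, using $|\xi(x)-\xi(y)|\le 1$), so after dividing by $|x-y|^N$ we get $\Phi(|u(y)|)\,|x-y|^{-N}$, which is integrable at infinity. Hence $\int_{\Omega}\Phi(|D_s\xi(x,y)|\,|u(y)|)\,d\mu(x)\le C(s,N,\Phi,\Omega)\,\Phi(|u(y)|)$ uniformly in $y$, and integrating in $y$ closes the estimate with $\int_\Omega \Phi(|u(y)|)\,dy$ on the right.

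The main technical subtlety — not a deep obstacle but the point requiring care — is the interplay between the two exponents $p^-_\Phi$ and $p^+_\Phi$ when applying \eqref{H3-p3}: the multiplicative factor $|D_s\xi(x,y)|$ is less than $1$ on the far region and may be larger than $1$ on part of the near region (if $L>1$), so one must track which of $t^{p^-_\Phi}$, $t^{p^+_\Phi}$ dominates in each regime and verify that the resulting power of $|x-y|$ is always strictly larger than $N-N=0$ below the cutoff, i.e., that near the diagonal the exponent $(1-s)p^-_\Phi$ beats the logarithmic divergence and away from it the decay $|x-y|^{-N}$ suffices. Since $s<1$ gives $(1-s)p^-_\Phi>0$ and the far-field integrand decays like $|x-y|^{-N}$ over a region where $\Phi(|u(y)|)$ is integrable in $y$ (by $u\in W^{s,\Phi}(\Omega)\subset L^\Phi(\Omega)$), both pieces converge, and absorbing all constants (depending only on $s,N,\Phi$ and the Lipschitz constant of $\xi$, hence on $\Omega$) yields the claim.
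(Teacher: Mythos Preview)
Your approach matches the paper's almost step for step---the $L^\Phi$ bound from monotonicity, the algebraic splitting $(\xi u)(x)-(\xi u)(y)=\xi(x)(u(x)-u(y))+u(y)(\xi(x)-\xi(y))$, the application of \eqref{H2-p3}, and the near/far decomposition at $|x-y|=1$. The near-diagonal piece is handled correctly.

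There is, however, a genuine gap in your far-region estimate. You bound $\Phi(|u(y)|\,|D_s\xi(x,y)|)\le \Phi(|u(y)|)$ using $|D_s\xi|\le 1$ for $|x-y|\ge 1$, and then claim that $\Phi(|u(y)|)\,|x-y|^{-N}$ is ``integrable at infinity.'' It is not: $\int_{|z|\ge 1}|z|^{-N}\,dz$ diverges logarithmically, and since the lemma is stated for arbitrary open $\Omega$ (and is in fact applied later to unbounded domains such as $\mathbb{R}^N_+$ and complements of bounded sets), you cannot rely on $\Omega$ being bounded to rescue the integral.

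The fix is the one the paper uses: on $\{|x-y|\ge 1\}$ the factor $|x-y|^{-s}\le 1$, so \eqref{H3-p3} gives
\[
\Phi\bigl(|u(y)|\,|x-y|^{-s}\bigr)\le |x-y|^{-sp^-_\Phi}\,\Phi(|u(y)|),
\]
and hence the integrand against $d\mu$ is $\Phi(|u(y)|)\,|x-y|^{-N-sp^-_\Phi}$, which is genuinely integrable in $x$ over $\{|x-y|\ge 1\}$ because $sp^-_\Phi>0$. With this correction your argument goes through and coincides with the paper's proof.
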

    \begin{proof}
 Let $u\in W^{s,\Phi}(\Omega)$. Since $\Phi$ is an increasing function and $\xi(x)\le 1$, we have $$\int_\Omega\Phi(|(\xi u)(x)|)dx\le \int_\Omega\Phi(| u(x)|)dx.$$ Adding and subtracting the factor $\xi(x)u(y)$ and using \eqref{H2-p3}, we get 
    \begin{multline*}
\int_{\Omega}\int_{\Omega}\Phi\left(\frac{|u(x)-\xi(y)u(y)|}{|x-y|^s}\right)d\mu\le 2^{p^+_\Phi }  \int_{\Omega}\int_{\Omega}\Phi\left(\frac{\xi(x)|u(x)-u(y)|}{|x-y|^s}\right)d\mu\\+2^{p^+_\Phi } \int_{\Omega}\int_{\Omega}\Phi\left(\frac{|\xi(x)u(y)-\xi(y)u(y)|}{|x-y|^s}\right)d\mu.
    \end{multline*}
Next, we estimate the second term on the right-hand side of the above inequality. Since $\xi\in\mathcal{C}^{0,1}(\Omega)$ and $0\le \xi\le 1,$ we have   
 \begin{multline*}
\int_{\Omega}\int_{\Omega}\Phi\left(\frac{|\xi(x)u(y)-\xi(y)u(y)|}{|x-y|^s}\right)d\mu\le \int_{\Omega}\int_{\Omega\cap |x-y|\le 1}\Phi\left(\frac{C|x-y||u(y)|}{|x-y|^s}\right)d\mu\\+\int_{\Omega}\int_{\Omega\cap |x-y|>1}\Phi\left(\frac{|u(y)|}{|x-y|^s}\right)d\mu.
 \end{multline*}
Now use \eqref{H3-p3} to obtain
\begin{multline*}
    \int_{\Omega}\int_{\Omega\cap |x-y|\le 1}\Phi\left(\frac{C|x-y||u(y)|}{|x-y|^s}\right)d\mu\leq C  \int_{\Omega}\int_{\Omega\cap |x-y|\le  1}\frac{\Phi\left(|u(y)|\right)}{|x-y|^{(s-1)p^-_\Phi }}d\mu\\=C  \int_{\Omega}\Phi\left(|u(y)|\right)dy\int_{\Omega\cap |x-y|\le  1}\frac{dx}{|x-y|^{(s-1)p^-_\Phi+N }}\le C\int_{\Omega}\Phi\left(|u(y)|\right)dy
\end{multline*}
where the last inequality follows from the fact that the kernel $|x-y|^{-(s-1)p^-_\Phi-N}$ is
summable with respect to $x$ if $|x-y|\le  1$ since $(s-1)p^-_\Phi+N<N$. Moreover, the
 kernel $|x-y|^{-N-sp^-_\Phi }$ is also summable when $|x-y|>1$ since $N+sp^-_\Phi > N.$ Thus, using \eqref{H3-p3} again we get
 \begin{equation*}
     \int_{\Omega}\int_{\Omega\cap |x-y|>1}\Phi\left(\frac{|u(y)|}{|x-y|^s}\right)d\mu\le \int_{\Omega}\int_{\Omega\cap |x-y|> 1}\frac{\Phi\left(|u(y)|\right)}{|x-y|^{N+sp^-_\Phi }}dxdy\le C\int_{\Omega}\Phi\left(|u(y)|\right)dy.
 \end{equation*}
 Finally, by combining the above inequalities, we conclude our result.
    \end{proof}
The next lemma establishes a relationship between the averages of $u$ over two disjoint sets.
\begin{lem}\label{Disjointset} Let $P$ and $Q$ be two disjoint subsets of $\mathbb{R}^N$. Then for every Young function $\Phi\in \D$, we have 
    $$\Phi\left(|(u)_{P}-(u)_{Q}|\right)\le \frac{2^{p^+_\Phi}}{\min\{|P|,|Q|\}}\int_{P\cup Q}\Phi(|u(x)-(u)_{P\cup Q}|)dx,\quad \forall\,u\in \mathcal{C}_c^1(\mathbb{R}^N).$$
    \end{lem}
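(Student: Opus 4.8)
The plan is to combine Jensen's inequality with the quasi-triangle estimate \eqref{H2-p3}. Write $m:=(u)_{P\cup Q}$ for the average of $u$ over $P\cup Q$ (which, together with $(u)_P$ and $(u)_Q$, is well defined since in all applications $0<|P|,|Q|<\infty$). The first step is the elementary identity $(u)_P-(u)_Q=\bigl((u)_P-m\bigr)-\bigl((u)_Q-m\bigr)$, so that by the triangle inequality, the monotonicity of $\Phi$, and \eqref{H2-p3},
\[
\Phi\bigl(|(u)_P-(u)_Q|\bigr)\le \Phi\bigl(|(u)_P-m|+|(u)_Q-m|\bigr)\le 2^{p^+_\Phi}\Bigl(\Phi\bigl(|(u)_P-m|\bigr)+\Phi\bigl(|(u)_Q-m|\bigr)\Bigr).
\]

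Next I would estimate each of the two terms on the right by Jensen's inequality. An Orlicz function is convex, being the primitive of the non-decreasing function $\varphi$ (Definition \ref{defn-orlicz}), and $(u)_P-m=\frac{1}{|P|}\int_P\bigl(u(x)-m\bigr)\,dx$; hence
\[
\Phi\bigl(|(u)_P-m|\bigr)=\Phi\!\left(\left|\frac{1}{|P|}\int_P\bigl(u(x)-m\bigr)\,dx\right|\right)\le \frac{1}{|P|}\int_P\Phi\bigl(|u(x)-m|\bigr)\,dx,
\]
and likewise with $Q$ in place of $P$.

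Finally I would put the pieces together: using $\frac{1}{|P|},\frac{1}{|Q|}\le \frac{1}{\min\{|P|,|Q|\}}$ and the disjointness of $P$ and $Q$, which yields $\int_P\Phi(|u-m|)\,dx+\int_Q\Phi(|u-m|)\,dx=\int_{P\cup Q}\Phi(|u-m|)\,dx$, one arrives at the stated bound with constant $2^{p^+_\Phi}$. There is essentially no obstacle in this argument; the only points worth recording are that the averages must be well defined (so $|P|,|Q|$ should be positive and finite, as is the case wherever the lemma is invoked), that the convexity of $\Phi$ is precisely what licenses the use of Jensen's inequality, and that the factor $2^{p^+_\Phi}$ is exactly the one supplied by \eqref{H2-p3}.
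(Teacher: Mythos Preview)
Your argument is correct and follows essentially the same route as the paper's proof: insert the intermediate average $(u)_{P\cup Q}$, apply the quasi-triangle inequality \eqref{H2-p3}, then Jensen's inequality on each piece, and finally combine using $\min\{|P|,|Q|\}$ and the disjointness of $P$ and $Q$. The only cosmetic difference is that the paper writes out the Jensen step as $\Phi\bigl(\frac{1}{|P|}\int_P|u-m|\,dx\bigr)$ rather than pulling the absolute value inside first, but this is immaterial.
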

    \begin{proof}
     Let $u\in \mathcal{C}_c^1(\mathbb{R}^N)$.  By using  \eqref{H2-p3} and Jensen's inequality,  
we get
\begin{align*}
     \Phi\left(|(u)_P-(u)_Q |\right)=&\Phi\left(|(u)_P-(u)_{P\cup Q}+(u)_Q+(u)_{P\cup Q}|\right)\\\le& 2^{p^+_\Phi }\Phi\left(|(u)_P-(u)_{P\cup Q}|\right)+2^{p^+_\Phi }\Phi\left(|(u)_Q-(u)_{P\cup Q}|\right)\\\le &2^{p^+_\Phi }\Phi\left( \frac{1}{|P|}\int_{P}|u(x)-(u)_{P\cup Q}|dx\right)+2^{p^+_\Phi }\Phi\left(\frac{1}{|Q|}\int_{ Q}|u(x)-(u)_{P\cup Q}|dx\right)\nonumber\\\leq & \frac{2^{p^+_\Phi }}{|P|}\int_P\Phi(|u(x)-(u)_{P\cup Q}|)dx+\frac{2^{p^+_\Phi }}{|Q|}\int_{ Q}\Phi(|u(x)-(u)_{P\cup Q}|)dx\nonumber\\\leq &  \frac{2^{p^+_\Phi }}{\min\{|P|,|Q|\}}\int_{P\cup Q}\Phi(|u(x)-(u)_{P\cup Q}|)dx.
\end{align*}
This completes the proof.
\end{proof}

Next, we recall a
lemma for a domain above the graph of a Lipschitz function. This lemma is useful for proving Theorem \ref{Orlicz-Hardy2-p3}, Theorem \ref{Orlicz-Hardy1*-p3}, and Theorem \ref{Orlicz-Hardy2*-p3}. For a proof, we refer to \cite[Appendix]{adisahuroy}.
\begin{lem}\label{lips}
    Let $\Omega$ be a domain above the graph of the Lipschitz function $\gamma:\mathbb{R}^{N-1}\rightarrow\mathbb{R}$.  
    Then, there exist positive constants $C_1$ and $C_2$ depending only on $\gamma$ such that $$C_1\delta_\Omega(x)\le x_N-\gamma(x^\prime)\le C_2\delta_\Omega(x),\quad\forall x\in \Omega.$$
\end{lem}
\begin{lem}[Change of variable formula]\cite[Theorem 3.9]{Evans-L2015}\label{changeofvar} 
 Let $T:\mathbb{R}^N\rightarrow\mathbb{R}^N$ be an invertible Lipschitz function and $DT(x)$ be the gradient matrix of $T$. If $|det DT(x)|=1$, then for every
measurable function $u:\mathbb{R}^N\rightarrow \mathbb{R}$ with $supp(u)\subset\Omega\subset\mathbb{R}^N,$ $$\int_\Omega u(x)dx=\int_{T(\Omega)}u\circ T^{-1}(x)dx.$$ 
\end{lem}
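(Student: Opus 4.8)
The plan is to deduce the identity from the \emph{area formula} for Lipschitz maps between Euclidean spaces of the same dimension. Since $T$ is Lipschitz, Rademacher's theorem ensures that $T$ is differentiable $\mathcal{L}^N$-a.e., so $DT(x)$ — and hence the hypothesis $|\det DT(x)|=1$ — is meaningful a.e. The version of the area formula I would use is the weighted one: for a Lipschitz map $T\colon\mathbb{R}^N\to\mathbb{R}^N$ and any $\mathcal{L}^N$-measurable $g\colon\mathbb{R}^N\to[0,\infty]$,
\[
\int_{\mathbb{R}^N} g(x)\,|\det DT(x)|\,dx=\int_{\mathbb{R}^N}\Big(\sum_{x\in T^{-1}(\{y\})} g(x)\Big)\,dy
\]
(see, e.g., \cite{Evans-L2015}). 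The two hypotheses on $T$ now do all the work: injectivity collapses the inner sum to the single value $g\big(T^{-1}(y)\big)$ when $y\in T(\mathbb{R}^N)$ and to $0$ otherwise, while $|\det DT(x)|=1$ a.e.\ collapses the left-hand integrand to $g(x)$. Hence, for every nonnegative measurable $g$,
\[
\int_{\mathbb{R}^N} g(x)\,dx=\int_{T(\mathbb{R}^N)} g\big(T^{-1}(y)\big)\,dy .
\]

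Next I would remove the sign and support restrictions. Applying the last identity to the positive and negative parts of a general measurable $u$ and subtracting (the two sides being simultaneously finite or infinite, so that no $\infty-\infty$ arises) gives $\int_{\mathbb{R}^N}u=\int_{T(\mathbb{R}^N)}u\circ T^{-1}$. If moreover $\operatorname{supp}(u)\subset\Omega$, then $u$ vanishes outside $\Omega$, so the left-hand side equals $\int_\Omega u$; and $u\circ T^{-1}$ vanishes outside $T(\Omega)$, so the right-hand side equals $\int_{T(\Omega)}u\circ T^{-1}$ — which is precisely the asserted formula. One small point to record along the way is that $u\circ T^{-1}$ is Lebesgue measurable: a Lipschitz map carries $\mathcal{L}^N$-measurable sets to $\mathcal{L}^N$-measurable sets (a continuous image of an $F_\sigma$ set is $F_\sigma$, and a Lipschitz map sends $\mathcal{L}^N$-null sets to $\mathcal{L}^N$-null sets), so the images under $T$ of Borel sets — equivalently, the preimages under $T^{-1}$ — are measurable.

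The only substantial ingredient is the area formula itself; once it is granted, everything above is bookkeeping. Reproducing its proof — approximating $T$ by affine maps on small sets via Rademacher's theorem, a Vitali-type covering argument, and the linear-algebra identity for the Jacobian — is out of scope, so I would simply invoke it from \cite{Evans-L2015} (or Federer's treatise). I do not anticipate a genuine obstacle; the one mild subtlety worth flagging is that an invertible Lipschitz map need not have a Lipschitz inverse, so one cannot naively symmetrize the change of variables. Working through the area formula avoids this entirely, since that formula requires only $T$ — not $T^{-1}$ — to be Lipschitz. As a byproduct, the computation also shows that $T$ is measure-preserving, which is in fact all that is needed when this lemma is invoked later in the paper.
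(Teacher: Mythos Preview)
Your proposal is correct. The paper does not supply its own proof of this lemma; it merely quotes it from \cite[Theorem~3.9]{Evans-L2015} (Evans--Gariepy), so there is nothing to compare against beyond that citation. Your derivation via the weighted area formula is exactly the standard argument behind the cited result, and your remarks on measurability of $u\circ T^{-1}$ and on not needing $T^{-1}$ to be Lipschitz are accurate and appropriate.
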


\section{Orlicz Hardy inequality in a domain with flat boundaries}\label{flatbunndary-p3}
In this section, we establish boundary Hardy-type inequalities within a domain characterized by a flat boundary and within the complement of a ball in $\mathbb{R}^N.$ These results will be used to prove our main theorems. 

\smallskip 
Let $2n\ge T>0$ such that $\frac{2n}{T}\in\mathbb{N}.$ Define the cube $$\Omega_{n,T}=(-n,n)^{N-1}\times (0,T)$$ 
and for each $k\in \mathbb{Z}$ with $k\le 0$, set $$A_k:=\left\{(x^\prime,x_N):x^\prime \in (-n,n)^{N-1},\;2^kT\le x_N<2^{k+1}T\right\}.$$ Then we have $$\Omega_{n,T}=\bigcup_{k=-\infty}^{-1}A_k.$$ 
Further, we divide each $A_k$ into disjoint cubes, each of side length $2^kT$, say $A_k^i$. There are $2^{(-k+1)(N-1)}n^{N-1}T^{1-N}$ such cubes $A^i_k$.  
For simplicity, let $\sigma_k=2^{(-k+1)(N-1)}n^{N-1}T^{1-N}.$
 Then, 
$$A_k=\bigcup_{i=1}^{\sigma_k}A_k^i.$$
  Here,  $\sigma_k$ is a natural number for $k\in \mathbb{Z}^-\cup\{0\}$, since $ \frac{2n}{T}\in\mathbb{N}.$

\smallskip

The subsequent lemma establishes an inequality for every $A_k^i$, which will be useful in proving Hardy-type inequalities within the domain $\Omega_{n,T}$ (see Lemma \ref{lem3} and Lemma \ref{lem2-p3}). 
\begin{lem}\label{lemm-1} Let $N\ge 1,\,s\in (0,1),\,\alpha_1,\alpha_2\in \mathbb{R}$, and $\tau:=s-\alpha_1-\alpha_2$. Then for any Young function $\Phi\in \D$, there exists $C=C(s,\alpha_1,\alpha_2,N,T,\Phi)>0$ so that for every $u\in \mathcal{C}_c^1(\Omega_{n,T}),$
    \begin{equation*}
        \int_{A_k^i}\Phi\left(\frac{|u(x)|}{x_N^\tau}\right)dx \leq C2^{kN}\Phi\left(2^{-k\tau}|(u)_{A_k^i}|\right)+C\int_{A_k^i}\int_{A_k^i}\Phi\left(|D_su(x,y)|x_N^{\alpha_1}y_N^{\alpha_2}\right)d\mu.
    \end{equation*}
\begin{proof}
Let $u\in \mathcal{C}_c^1(\Omega_{n,T}).$ Notice that $A_k^i$ is a translation of $(2^kT,2^{k+1}T)^N$.  Thus, applying Proposition \ref{poincare} with $\Omega=(T,2T)^N$ and $\lambda=2^k$, and using translation invariance, we obtain 
    \begin{equation*}
        \int_{A_k^i} \Phi\left(|u(x)-(u)_{A_k^i}|\right)dx\leq C\int_{A_k^i}\int_{A_k^i}\Phi\left(2^{ks}|D_s u(x,y)|\right)d\mu,
    \end{equation*}
    where $C=C(s,N,T,\Phi)$ is a positive constant. Now using \eqref{H2-p3}, we obtain
    \begin{align*}
    \int_{A_k^i}\Phi\left(|u(x)|\right) dx& =  \int_{A_k^i}\Phi\left(|(u)_{A_k^i}+u(x)-(u)_{A_k^i}|\right)dx\\& \leq  2^{p^+_\Phi }\int_{A_k^i}\Phi\left(|(u)_{A_k^i}|\right)dx+ 2^{p^+_\Phi }\int_{A_k^i}\Phi\left(|u(x)-(u)_{A_k^i}|\right)dx\\& \leq  2^{p^+_\Phi }|A^i_k|\Phi\left(|(u)_{A_k^i}|\right)+ 2^{p^+_\Phi }C\int_{A_k^i}\int_{A_k^i}\Phi\left(2^{ks}|D_s u(x,y)|\right)d\mu.
\end{align*}   
    Replace $u$ with $2^{-k\tau}u$ in the above inequality to get
        \begin{equation*}
        \int_{A_k^i} \Phi\left(2^{-k\tau}|u(x)|\right)dx\leq C2^{kN}\Phi\left(2^{-k\tau}|(u)_{A_k^i}|\right)+C\int_{A_k^i}\int_{A_k^i}\Phi\left(2^{k(s-\tau)}|D_s u(x,y)|\right)d\mu.
    \end{equation*}
For $x=(x^\prime,x_N)\in A_k^i,$ we have $2^kT\le x_N< 2^{k+1}T$. Thus, using \eqref{H3-p3}, we get  $C=C(s,\alpha_1,\alpha_2,T,\Phi)>0$ such that for every $x,y\in A_k^i$, 
$$\Phi\left(\frac{|u(x)|}{x_N^\tau}\right)\le C\Phi(2^{-k\tau}|u(x)|)$$ and 
$$\Phi\left(2^{k(s-\tau)}|D_s u(x,y)|\right)=\Phi\left(2^{k(\alpha_1+\alpha_2)}|D_s u(x,y)|\right)\le C\Phi\left(x_N^{\alpha_1}y_N^{\alpha_2}|D_s u(x,y)|\right).$$ 
Hence, the conclusion follows from the last three inequalities.
\end{proof}
    
\end{lem}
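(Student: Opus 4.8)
\textbf{Proof proposal for Lemma \ref{lemm-1}.}

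The plan is to reduce the estimate on the cube $A_k^i$ to the fractional Orlicz Poincar\'e--Wirtinger inequality (Proposition \ref{poincare}) on a fixed reference cube, and then convert the resulting scale-$2^k$ factors into the weights $x_N^\tau$, $x_N^{\alpha_1}y_N^{\alpha_2}$ using the two-sided comparison $2^kT\le x_N<2^{k+1}T$ valid on $A_k^i$. First I would record that $A_k^i$ is a translate of the cube $(2^kT,2^{k+1}T)^N=(2^kT)\cdot(T,2T)^N$, so that $A_k^i=(\Omega_\lambda)+v$ with $\Omega=(T,2T)^N$, $\lambda=2^k$, and $v$ a translation vector; since Proposition \ref{poincare} is stated on dilates $\Omega_\lambda$ and the Gagliardo seminorm and the average are translation invariant, applying it with this $\Omega$ and $\lambda$ gives
\begin{equation*}
\int_{A_k^i}\Phi\left(|u(x)-(u)_{A_k^i}|\right)dx\le C\int_{A_k^i}\int_{A_k^i}\Phi\left(2^{ks}|D_su(x,y)|\right)d\mu,
\end{equation*}
with $C=C(s,N,T,\Phi)$. (One should note $u\in\mathcal C^1_c(\Omega_{n,T})$ restricts to an admissible function on $A_k^i$, so this is legitimate.)

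Next I would split $u=(u)_{A_k^i}+(u-(u)_{A_k^i})$ and apply the quasi-triangle inequality \eqref{H2-p3}, getting
\begin{equation*}
\int_{A_k^i}\Phi(|u(x)|)\,dx\le 2^{p^+_\Phi}|A_k^i|\,\Phi\left(|(u)_{A_k^i}|\right)+2^{p^+_\Phi}C\int_{A_k^i}\int_{A_k^i}\Phi\left(2^{ks}|D_su(x,y)|\right)d\mu,
\end{equation*}
where $|A_k^i|=(2^kT)^N\asymp 2^{kN}$. The only remaining manipulation is the rescaling: replace $u$ by $2^{-k\tau}u$ throughout (both sides are of the stated homogeneity degree in $u$ under $\Phi$, so this is just a substitution), which turns the seminorm argument into $2^{k(s-\tau)}=2^{k(\alpha_1+\alpha_2)}$ and the left side into $\int_{A_k^i}\Phi(2^{-k\tau}|u(x)|)\,dx$. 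Finally, on $A_k^i$ one has $x_N\asymp 2^kT$ and $y_N\asymp 2^kT$ with constants depending only on $T$; hence by \eqref{H3-p3} (which compares $\Phi(ct)$ with $\Phi(t)$ up to a factor $\max\{c^{p^-_\Phi},c^{p^+_\Phi}\}$, absorbed into $C=C(s,\alpha_1,\alpha_2,T,\Phi)$) one gets $\Phi(x_N^{-\tau}|u(x)|)\le C\Phi(2^{-k\tau}|u(x)|)$ and $\Phi(2^{k(\alpha_1+\alpha_2)}|D_su(x,y)|)\le C\Phi(x_N^{\alpha_1}y_N^{\alpha_2}|D_su(x,y)|)$, and combining these with the previous display yields the claim.

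The routine points to be careful about are: checking that Proposition \ref{poincare} genuinely applies to a translate of a dilate (translation invariance of all three terms), and keeping track of which constants are allowed to depend on what — in particular that the number $\sigma_k$ of subcubes never enters, since the whole argument is local to a single $A_k^i$. I do not expect a genuine obstacle here; the only mildly delicate step is the conversion between the constant weight $2^{k\tau}$ (resp.\ $2^{k(\alpha_1+\alpha_2)}$) and the variable weights $x_N^\tau$ (resp.\ $x_N^{\alpha_1}y_N^{\alpha_2}$), which works precisely because $\alpha_1,\alpha_2$ are fixed reals and $x_N,y_N$ range over a dyadic band, so the ratio is bounded above and below by constants depending only on $T$ and the exponents, and \eqref{H3-p3} then handles the passage through $\Phi$.
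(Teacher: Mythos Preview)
Your proposal is correct and follows essentially the same route as the paper: apply Proposition~\ref{poincare} on the reference cube $(T,2T)^N$ scaled by $\lambda=2^k$ (using translation invariance), split $u$ via \eqref{H2-p3}, substitute $u\mapsto 2^{-k\tau}u$, and finally use $x_N\asymp 2^kT$ on $A_k^i$ together with \eqref{H3-p3} to pass from the dyadic factors to the weights $x_N^\tau$ and $x_N^{\alpha_1}y_N^{\alpha_2}$. The paper's proof is line-for-line the same argument.
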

The next lemma relates the averages of $u$ over two disjoint cubes $A_k^i$ and $A_{k+1}^j$, where $A_{k+1}^j$ ($j\in \{1,\dots,\sigma_{k+1}\}$) is chosen so that $A_k^i$ lies below it. There are $2^{N-1}$ cubes $A_k^i$ lying below $A^j_{k+1}$, since $\sigma_k=2^{N-1}\sigma_{k+1}$. For $\Phi(t)=t^p$, this lemma is proved in \cite[Lemma 3.2]{adisahuroy}. We adapt their proof in the Orlicz setting.

\begin{lem}\label{Lem-avg}
Let $N\ge 1,\,s\in (0,1),\,\alpha_1,\alpha_2\in \mathbb{R}$, and $\tau:=s-\alpha_1-\alpha_2$. Let $A_{k+1}^j$ be a cube such that  $A_k^i$ lies below the cube $A_{k+1}^j$. Then for any Young function $\Phi\in \D$, there exists  a positive constant $C$ independent of both $u$ and $k$ so that  
    $$\Phi\left(2^{-k\tau}|(u)_{A_k^i}-(u)_{A_{k+1}^j}|\right)\le \frac{C}{2^{kN}}\int_{A_k^i\cup A_{k+1}^j}\int_{A_k^i\cup A_{k+1}^j}\Phi\left(|D_su(x,y)|x_N^{\alpha_1}y_N^{\alpha_2}\right)d\mu,\; \forall\,u\in \mathcal{C}_c^1(\Omega_{n,T}).$$
    \end{lem}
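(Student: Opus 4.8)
The plan is to combine Lemma~\ref{Disjointset} with the fractional Poincar\'e--Wirtinger inequality (Proposition~\ref{poincare}), applied on the union $P\cup Q$ where $P=A_k^i$ and $Q=A_{k+1}^j$. Since $A_k^i$ lies below $A_{k+1}^j$, the union $A_k^i\cup A_{k+1}^j$ is (a translate of) a rectangular box of the form $(T,2T)^{N-1}\times(T,4T)$ scaled by the factor $2^k$; in particular it is a dilation by $\lambda=2^k$ of a fixed bounded open set $\Omega^\ast$ independent of $k$ (it does depend on $N,T$, but that is harmless). First I would apply Lemma~\ref{Disjointset} with this choice of $P,Q$: noting that $\min\{|A_k^i|,|A_{k+1}^j|\}=|A_k^i|=(2^kT)^N=c\,2^{kN}$, this gives
\begin{equation*}
\Phi\bigl(|(u)_{A_k^i}-(u)_{A_{k+1}^j}|\bigr)\le \frac{2^{p^+_\Phi}}{c\,2^{kN}}\int_{A_k^i\cup A_{k+1}^j}\Phi\bigl(|u(x)-(u)_{A_k^i\cup A_{k+1}^j}|\bigr)\,dx.
\end{equation*}

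Next I would feed the right-hand side into Proposition~\ref{poincare} with $\Omega^\ast$ the fixed box and $\lambda=2^k$, using translation invariance of all the quantities involved (the measure $d\mu$, the seminorm, and the average) to move from the translate of $2^k\Omega^\ast$ to $A_k^i\cup A_{k+1}^j$ itself. This yields
\begin{equation*}
\int_{A_k^i\cup A_{k+1}^j}\Phi\bigl(|u(x)-(u)_{A_k^i\cup A_{k+1}^j}|\bigr)\,dx\le C\int_{A_k^i\cup A_{k+1}^j}\int_{A_k^i\cup A_{k+1}^j}\Phi\bigl(2^{ks}|D_su(x,y)|\bigr)\,d\mu,
\end{equation*}
with $C=C(s,N,T,\Phi)$ independent of $k$. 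Combining the last two displays gives, after replacing $u$ by $2^{-k\tau}u$ (which multiplies the left-hand side argument by $2^{-k\tau}$ and the right-hand side argument by $2^{-k\tau}\cdot2^{ks}=2^{k(\alpha_1+\alpha_2)}$),
\begin{equation*}
\Phi\bigl(2^{-k\tau}|(u)_{A_k^i}-(u)_{A_{k+1}^j}|\bigr)\le \frac{C}{2^{kN}}\int_{A_k^i\cup A_{k+1}^j}\int_{A_k^i\cup A_{k+1}^j}\Phi\bigl(2^{k(\alpha_1+\alpha_2)}|D_su(x,y)|\bigr)\,d\mu.
\end{equation*}

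The final step is to absorb the factor $2^{k(\alpha_1+\alpha_2)}$ into the weights $x_N^{\alpha_1}y_N^{\alpha_2}$: for $x=(x',x_N),y=(y',y_N)\in A_k^i\cup A_{k+1}^j$ we have $2^kT\le x_N,y_N<2^{k+2}T$, so $2^{k(\alpha_1+\alpha_2)}\asymp x_N^{\alpha_1}y_N^{\alpha_2}$ with constants depending only on $\alpha_1,\alpha_2,T$; applying \eqref{H3-p3} (exactly as in the proof of Lemma~\ref{lemm-1}) converts $\Phi(2^{k(\alpha_1+\alpha_2)}|D_su(x,y)|)$ into a constant multiple of $\Phi(|D_su(x,y)|x_N^{\alpha_1}y_N^{\alpha_2})$, giving the claimed inequality. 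I expect the only mildly delicate point to be bookkeeping the geometry of $A_k^i\cup A_{k+1}^j$ and verifying that it is genuinely a $k$-independent dilate of a fixed box (so that the Poincar\'e constant does not blow up with $k$); the rest is a routine chain of the already-established lemmas.
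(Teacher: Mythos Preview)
Your proposal is correct and follows essentially the same route as the paper: apply Lemma~\ref{Disjointset} to $P=A_k^i$, $Q=A_{k+1}^j$, then Proposition~\ref{poincare} on the union (viewed as a $2^k$-dilate of a fixed set), replace $u$ by $2^{-k\tau}u$, and absorb $2^{k(\alpha_1+\alpha_2)}$ into $x_N^{\alpha_1}y_N^{\alpha_2}$ via \eqref{H3-p3}. One small correction to your geometric description: since $A_{k+1}^j$ has side $2^{k+1}T$ while $A_k^i$ has side $2^kT$, the union is \emph{not} a rectangular box but rather an L-shaped region (a large cube with one of $2^{N-1}$ small cubes attached below); it is nonetheless a translate of $2^{k+1}\Omega^\ast$ for one of finitely many fixed open sets $\Omega^\ast$, which is exactly what Proposition~\ref{poincare} needs---the paper phrases this as ``choose an open set $\Omega$ such that $\Omega_\lambda$ with $\lambda=2^{k+1}$ is a translation of $A_k^i\cup A_{k+1}^j$.''
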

    \begin{proof}  Let $u\in \mathcal{C}_c^1(\Omega_{n,T})$.
   By applying Lemma \ref{Disjointset} with $P=A_k^i$ and $Q=A^j_{k+1}$,
we get
$$\Phi\left(|(u)_{A_k^i}-(u)_{A_{k+1}^j} |\right)\leq   \frac{C}{2^{kN}}\int_{A_k^i\cup A_{k+1}^j}\Phi\left(|u(x)-(u)_{A^i_k\cup A^j_{k+1}}|\right)dx.$$
Choose an open set $\Omega$ such that $\Omega_\lambda$ with $\lambda=2^{k+1}$ is a translation of $A_k^i\cup A_{k+1}^j$.  Applying Proposition \ref{poincare} with this $\Omega$ and $\lambda=2^{k+1}$, and using translation invariance, we obtain
\begin{equation*}
    \int_{A_k^i\cup A^j_{k+1}}\Phi\left(|u(x)-(u)_{A^i_k\cup A^j_{k+1}}|\right)dx\leq C \int_{A^i_k\cup A^j_{k+1}}\int_{A^i_k\cup A^j_{k+1}}\Phi\left(2^{ks+s}|D_su(x,y)|\right)d\mu.
\end{equation*}
Combining the above two inequalities (and then  replacing $u$ with $2^{-k\tau}u$), we get 
$$\Phi\left(2^{-k\tau}|(u)_{A_k^i}-(u)_{A_{k+1}^j} |\right)\leq  \frac{C}{2^{kN}} \int_{A^i_k\cup A^j_{k+1}}\int_{A^i_k\cup A^j_{k+1}}\Phi\left(2^{k(s-\tau)+s}|D_su(x,y)|\right)d\mu.$$
For $x=(x^\prime,x_N)\in A_k^i\cup A^j_{k+1},$ we have $2^kT\le x_N< 2^{k+2}T$. Thus, 
using \eqref{H3-p3}, we get $C=C(s,\alpha_1,\alpha_2,T,\Phi)>0$ such that for every $x,y\in A_k^i\cup A^j_{k+1}$, 
$$\Phi\left(2^{k(s-\tau)+s}|D_s u(x,y)|\right)=\Phi\left(2^{k(\alpha_1+\alpha_2)+s}|D_s u(x,y)|\right)\le C\Phi\left(x_N^{\alpha_1}y_N^{\alpha_2}|D_s u(x,y)|\right).$$
Hence, the conclusion follows from the above two inequalities.
    \end{proof}
    
    \smallskip

Next, we establish a boundary Hardy-type inequality for the domain $\Omega_{n,T}$ by using the lemma Lemma \ref{Lem-H-p3}.  This lemma is crucial for proving Theorem \ref{Orlicz-Hardy1*-p3}, as the proof for a general Lipschitz domain can be derived using the standard patching technique with a partition of unity. Details of this method will be provided in the next section.
\begin{lem}\label{lem3}
    Let $N\ge 1,\,s\in (0,1),$ and $\Phi\in \D$ be a Young function. If $sp^+_\Phi < 1$, then there exists  $C=C(s,N,\Phi,T)>0$ so that  $$\int_{\Omega_{n,T}}\Phi\left(\frac{|u(x)|}{x_N^s}\right)dx\le C\int_{\Omega_{n,T}}\int_{\Omega_{n,T}}\Phi(|D_su(x,y)|)d\mu+C\int_{\Omega_{n,T}} \Phi(|u(x)|)dx,\quad\forall\,u\in \mathcal{C}_c^1(\Omega_{n,T}).$$
    \end{lem}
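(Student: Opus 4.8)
The plan is to decompose $\Omega_{n,T}$ into the dyadic strips $A_k$ ($k\le -1$), further into the cubes $A_k^i$, and to control the Hardy integral over each cube via Lemma \ref{lemm-1} (with $\alpha_1=\alpha_2=0$, so $\tau=s$). This gives
\[
\int_{\Omega_{n,T}}\Phi\!\left(\frac{|u(x)|}{x_N^s}\right)dx\le C\sum_{k\le -1}\sum_{i=1}^{\sigma_k}2^{kN}\Phi\!\left(2^{-ks}|(u)_{A_k^i}|\right)+C\,I_{\Phi,\Omega_{n,T}}(u).
\]
The seminorm term is already of the desired form, so the crux is to bound $\sum_{k,i}2^{kN}\Phi(2^{-ks}|(u)_{A_k^i}|)$. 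First I would handle the average over a single cube: for a fixed $k$ and the cube $A_{k+1}^j$ sitting above $A_k^i$, telescope back toward the coarsest scale $k=-1$, writing $(u)_{A_k^i}=(u)_{A_{-1}^{j_{-1}}}+\sum_{m=k}^{-2}\big((u)_{A_m^{j_m}}-(u)_{A_{m+1}^{j_{m+1}}}\big)$ along a chain of nested cubes, where at each step $A_m^{j_m}$ lies below $A_{m+1}^{j_{m+1}}$.

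The heart of the argument is then to apply the Orlicz splitting inequality, Lemma \ref{Lem-H-p3} (equivalently \eqref{in-2}), repeatedly to $\Phi(2^{-ks}|(u)_{A_k^i}|)$: split off each telescoped difference $2^{-ks}|(u)_{A_m^{j_m}}-(u)_{A_{m+1}^{j_{m+1}}}|$ with a parameter $\lambda_m>1$, leaving a factor $\lambda_m$ on the coarse part and $C(\lambda_m-1)^{-(p^+_\Phi-1)}$ on each difference. Because $(u)_{A_m^{j_m}}-(u)_{A_{m+1}^{j_{m+1}}}$ is a difference of averages over cubes at scale $2^m$ and $2^{m+1}$, I would rewrite $2^{-ks}|(u)_{A_m^{j_m}}-(u)_{A_{m+1}^{j_{m+1}}}|=2^{(m-k)s}\cdot 2^{-ms}|(u)_{A_m^{j_m}}-(u)_{A_{m+1}^{j_{m+1}}}|$ and apply \eqref{H3-p3} to pull out the factor $2^{(m-k)s p^+_\Phi}$ (using $m-k\ge 0$), then invoke Lemma \ref{Lem-avg} to bound $\Phi(2^{-ms}|(u)_{A_m^{j_m}}-(u)_{A_{m+1}^{j_{m+1}}}|)$ by $2^{-mN}$ times the local $\Phi$-seminorm on $A_m^{j_m}\cup A_{m+1}^{j_{m+1}}$. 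The coarse remainder $\Phi(2^{s}|(u)_{A_{-1}^{\,\cdot}}|)$ is controlled, after using \eqref{H3-p3} and Jensen, by $2^{N}\int_{A_{-1}^{\cdot}}\Phi(|u|)\,dx$, i.e.\ by $\int_{\Omega_{n,T}}\Phi(|u|)\,dx$.

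Putting this together and summing over $i$ (at most $\sigma_k$ cubes) and $k$, each local seminorm contribution $\int_{A_m^{j}\cup A_{m+1}^{j'}}\int\Phi(|D_su|)\,d\mu$ appears with a weight that, collecting all factors, is bounded by a constant times
\[
\sum_{k\le m}\Big(\prod_{\ell=k}^{m}\lambda_\ell\Big)\,C(\lambda_m-1)^{-(p^+_\Phi-1)}\,2^{(m-k)sp^+_\Phi}\,2^{kN}2^{-mN}.
\]
Here is where $sp^+_\Phi<1$ is used decisively: choosing $\lambda_\ell=1+\eta$ constant with $\eta$ small, the product $\prod\lambda_\ell$ grows like $(1+\eta)^{m-k}$, and the remaining geometric factor is $2^{(m-k)(sp^+_\Phi-N)}\le 2^{(m-k)(sp^+_\Phi-1)}$ (since $N\ge 1$), which decays geometrically; for $\eta$ small enough the full series $\sum_{k\le m}(1+\eta)^{m-k}2^{(m-k)(sp^+_\Phi-1)}$ converges to a constant independent of $m$. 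Thus the total seminorm weight is uniformly bounded, and summing the $A_k^i$-Poincar\'e remainders and the coarse $L^\Phi$ terms yields $\int_{\Omega_{n,T}}\Phi(|D_su(x,y)|)\,d\mu + \int_{\Omega_{n,T}}\Phi(|u(x)|)\,dx$ on the right, as claimed. The main obstacle I anticipate is the bookkeeping of how many times each pair $A_m^{j}\cup A_{m+1}^{j'}$ is counted across the chains and verifying that the geometric series closes uniformly in $m$; once the constant $\eta$ is fixed by the condition $sp^+_\Phi<1$, everything else is a careful but routine summation using \eqref{H2-p3}, \eqref{H3-p3}, Lemma \ref{lemm-1}, Lemma \ref{Lem-avg}, and Lemma \ref{Lem-H-p3}.
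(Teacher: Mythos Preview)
Your proposal is correct and uses the same ingredients as the paper (the dyadic decomposition into $A_k^i$, Lemma~\ref{lemm-1}, Lemma~\ref{Lem-avg}, Lemma~\ref{Lem-H-p3}, and \eqref{H3-p3}), but the organization differs from the paper's argument. The paper does not telescope each average all the way up to level $-1$. Instead it derives a single one-step recursion at the level of the aggregated quantities $a_k:=\sum_{i=1}^{\sigma_k}\Phi(2^{-ks}|(u)_{A_k^i}|)$: applying Lemma~\ref{Lem-H-p3} once with a \emph{fixed} $\lambda=\Lambda$ and summing over $i,j$ gives
\[
2^{kN}a_k\le 2^{sp_\Phi^+-1}\Lambda\cdot 2^{(k+1)N}a_{k+1}+C\int_{A_k\cup A_{k+1}}\!\!\int_{A_k\cup A_{k+1}}\Phi(|D_su|)\,d\mu,
\]
then sums over $k$ and absorbs the $a_{k+1}$ terms back into the left using $2^{sp_\Phi^+-1}\Lambda<1$ (which is possible precisely because $sp_\Phi^+<1$). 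Your approach instead iterates Lemma~\ref{Lem-H-p3} along a chain, expands each $\Phi(2^{-ks}|(u)_{A_k^i}|)$ completely, and then closes a geometric series $\sum_{\ell\ge 0}(\lambda 2^{sp_\Phi^+-1})^\ell$; the convergence criterion is the same inequality $\lambda 2^{sp_\Phi^+-1}<1$. The paper's route is slightly cleaner because working with the sums $a_k$ avoids the chain-counting bookkeeping you flagged (each $A_m^j$ is hit by $2^{(m-k)(N-1)}$ cubes at level $k$, which is exactly the factor that turns your $2^{(m-k)(sp_\Phi^+-N)}$ into $2^{(m-k)(sp_\Phi^+-1)}$); your route is more explicit and would extend just as well. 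Either way the condition $sp_\Phi^+<1$ enters at the identical place.
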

    \begin{proof}
Let $u\in \mathcal{C}_c^1(\Omega_{n,T}).$ Applying Lemma \ref{lemm-1} with $\alpha_1=\alpha_2=0$, we obtain
\begin{align*}
    \int_{A_k^i}\Phi\left(\frac{|u(x)|}{x_N^s}\right)dx \le C2^{kN}\Phi\left(2^{-ks}|(u)_{A_k^i}|\right)+C\int_{A_k^i}\int_{A_k^i} \Phi(|D_su(x,y)|)d\mu.
\end{align*}       
Sum up the above inequalities from $i=1$ to $\sigma_k$ to obtain  
  $$\int_{A_k}\Phi\left(\frac{|u(x)|}{x_N^s}\right)dx\le C2^{kN}\sum_{i=1}^{\sigma_k}\Phi\left(2^{-ks}|(u)_{A_k^i}|\right)+C\int_{A_k}\int_{A_k} \Phi(|D_su(x,y)|)d\mu.$$
Let $m\in \mathbb{Z}$ such that $m<-2$, and for simplicity, let $a_k=\sum_{i=1}^{\sigma_{k}}\Phi\left(2^{-ks}|(u)_{A_k^i}|\right).$ Then, summing the above inequalities from $k=m$ to $-1$, we get 
\begin{equation}\label{lem1-eq1-3-p3}
\sum_{k=m}^{-1}\int_{A_k}\Phi\left(\frac{|u(x)|}{x_N^s}\right)dx\le C\sum_{k=m}^{-1}2^{kN}a_k+C\int_{\Omega_{n,T}}\int_{\Omega_{n,T}} \Phi(|D_su(x,y)|)d\mu.
\end{equation}
Next, we estimate the first term on the right-hand side of \eqref{lem1-eq1-3-p3}. Let $A_{k+1}^j$ be a cube such that $A_k^i$ lies below it, and $\Lambda>1$ (which will be determined later). By applying the triangle inequality and  Lemma \ref{Lem-H-p3} with $\lambda=\Lambda$, we get $C=C(\Phi,\Lambda)>0$ such that 
 \begin{align*}
 \Phi\left(2^{-ks}|(u)_{A_k^i}|\right)&\leq\Phi \left(2^{-ks}|(u)_{A_{k+1}^j}|+2^{-ks}|(u)_{A_k^i}- (u)_{A_{k+1}^j}|\right)\nonumber\\&\leq \Lambda\Phi\left(2^{-ks}|(u)_{A_{k+1}^j}|\right)+C\Phi\left(2^{-ks}|(u)_{A_k^i}- (u)_{A_{k+1}^j}|\right).
 \end{align*}
Further, we use \eqref{H3-p3} to estimate the first term on the right hand of the above inequality:
 \begin{align*}
     \Phi\left(2^{-ks}|(u)_{A_{k+1}^j}|\right)=\Phi\left(2^s\,2^{-(k+1)s}|(u)_{A_{k+1}^j}|\right)\leq 2^{sp_\Phi^+}\Phi\left(2^{ -(k+1)s}|(u)_{A_{k+1}^j}|\right).
\end{align*} 
By combining the above two inequalities and using Lemma \ref{Lem-avg} with $\alpha_1=\alpha_2=0$, we obtain 
 \begin{equation*}
     \Phi\left(2^{-ks}|(u)_{A_k^i}|\right)\leq 2^{sp^+_\Phi}\Lambda\Phi\left(2^{-(k+1)s}|(u)_{A_{k+1}^j}|\right) + \frac{C}{2^{kN}}\int_{A_k^i\cup A_{k+1}^j}\int_{A_k^i\cup A_{k+1}^j}\Phi(|D_su(x,y)|)d\mu.
 \end{equation*}
Notice that, there are $2^{N-1}$ such cubes $A_k^i$ lying below the cube $A_{k+1}^j$. Therefore, multiplying the above inequality by $2^{kN}$ and summing from $i=2^{N-1}(j-1)+1$ to $2^{N-1}j$, we can derive 
 \begin{multline*}
2^{kN} \sum^{2^{N-1}j}_{i=2^{N-1}(j-1)+1} \Phi\left(2^{-ks}|(u)_{A_k^i}|\right)\leq 2^{N-1}2^{(kN+sp^+_\Phi)}\Lambda\Phi\left(2^{ -(k+1)s}|(u)_{A_{k+1}^j}|\right)\\+C\sum^{2^{N-1}j}_{i=2^{N-1}(j-1)+1}\int_{A_k^i\cup A_{k+1}^j}\int_{A_k^i\cup A_{k+1}^j}\Phi(|D_su(x,y)|)d\mu.
 \end{multline*}
 Again, summing the above inequalities from $j=1$ to $\sigma_{k+1}$ and using the fact that 
 \begin{equation}\label{eqal-p3}
\sum_{j=1}^{\sigma_{k+1}}\left(\sum^{2^{N-1}j}_{i=2^{N-1}(j-1)+1} \Phi\left(2^{-ks}|(u)_{A_k^i}|\right)\right)=\sum_{i=1}^{\sigma_{k}}\Phi\left(2^{-ks}|(u)_{A_k^i}|\right)
 \end{equation}
 and
\begin{equation}\label{eqn2-lem1-3-p3}
\sum_{j=1}^{\sigma_{k+1}}\left(\sum^{2^{N-1}j}_{i=2^{N-1}(j-1)+1}\int_{A_k^i\cup A_{k+1}^j}\int_{A_k^i\cup A_{k+1}^j}\Phi(|D_su|)d\mu\right)\leq \int_{A_k\cup A_{k+1}}\int_{A_k\cup A_{k+1}}\Phi(|D_su|)d\mu,
 \end{equation}
we obtain 
\begin{equation*}
    2^{kN}a_k\leq 2^{N-1+kN+sp^+_\Phi} \Lambda \,a_{k+1}+C\int_{A_k\cup A_{k+1}}\int_{A_k\cup A_{k+1}}\Phi(|D_su(x,y)|)d\mu.
\end{equation*}
Summing the above inequalities from $k=m$ to $-2,$ we obtain
\begin{equation*}
    \sum_{k=m}^{-2} 2^{kN}a_k\leq 2^{sp^+_\Phi-1}\Lambda\sum_{k=m}^{-2} 2^{N(k+1)}a_{k+1}+C\sum_{k=m}^{-2} \int_{A_k\cup A_{k+1}}\int_{A_k\cup A_{k+1}}\Phi(|D_su(x,y)|)d\mu.
\end{equation*}
By changing sides, re-indexing, and rearranging, we get   
\begin{equation*}
 \frac{\Lambda 2^{mN}a_m}{2^{-sp^+_\Phi+1}} +\left(1-2^{sp^+_\Phi-1}\Lambda\right)\sum_{k=m}^{-1} 2^{kN}a_{k}\le \frac{a_{-1}}{2^N}+C\sum_{k=m}^{-2} \int_{A_k\cup A_{k+1}}\int_{A_k\cup A_{k+1}}\Phi(|D_su(x,y)|)d\mu.
\end{equation*}
Since $sp^+_\Phi<1,$ there exists $\Lambda=\Lambda(s,p^+_\Phi)>1$ such that $$2^{sp^+_\Phi-1}\Lambda<1.$$
Further, use Jensen's inequality and \eqref{H3-p3} to get
\begin{align*}
   a_{-1}=   \sum_{i=1}^{\sigma_{-1}}\Phi\left(2^{s}|(u)_{A_{-1}^i}|\right)\le \sum_{i=1}^{\sigma_{-1}}\frac{1}{|A_{-1}^i|}\int_{{A_{-1}^i}}\Phi(2^{s}|u(x)|)dx\le  C\int_{\Omega_{n,T}}\Phi(|u(x)|)dx.
\end{align*}
Consequently, using $\Lambda a_m > 0$, we obtain 
 \begin{equation*}
   \sum_{k=m}^{-1} 2^{kN}a_k\le C\int_{\Omega_{n,T}}\Phi(|u(x)|)dx+C \int_{\Omega_{n,T}}\int_{\Omega_{n,T}}\Phi(|D_su(x,y)|)d\mu.
 \end{equation*}
Hence the result follows from \eqref{lem1-eq1-3-p3} by taking $m\to -\infty$. 
\end{proof}

The following lemma is helpful to prove Theorem \ref{Orlicz-Hardy2-p3} and $(i)$ of Theorem \ref{Orlicz-Hardy2*-p3}. It also establishes $(i)$ of Theorem \ref{Orlicz-Hardy2*-p3} when $\Omega=\mathbb{R}^N_+$ with test functions supported on cubes.

\begin{lem}\label{lem2-p3}
    Let $N\ge 1,s\in (0,1)$, and $\alpha_1,\alpha_2,\tau\in\mathbb{R}$ be such that $\tau=s-\alpha_1-\alpha_2$. For any Young function $\Phi\in \D$, if $\tau p^+_\Phi =1$, then there exists  $C=C(s,N,\Phi,T)>0$ so that for every $u\in \mathcal{C}_c^1(\Omega_{n,T}),$ $$\int_{\Omega_{n,T}}\Phi\left(\frac{|u(x)|}{x_N^\tau}\right)\frac{dx}{\ln^{p^+_\Phi}(2T/x_N)}\le C\left(\int_{\Omega_{n,T}}\int_{\Omega_{n,T}}\Phi(|D_su|x_N^{\alpha_1}y_N^{\alpha_2})d\mu+ \int_{\Omega_{n,T}} \Phi(|u(x)|)dx\right).$$
    \end{lem}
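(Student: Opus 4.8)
The plan is to follow the template of Lemma \ref{lem3}, but now accounting for the logarithmic weight that arises precisely because the geometric series that was strictly subconvergent there (when $sp^+_\Phi<1$) is exactly critical here (since $\tau p^+_\Phi=1$). As in Lemma \ref{lem3}, I start from Lemma \ref{lemm-1}, which gives, for each dyadic cube $A_k^i$,
\begin{equation*}
\int_{A_k^i}\Phi\left(\frac{|u(x)|}{x_N^\tau}\right)dx\le C2^{kN}\Phi\left(2^{-k\tau}|(u)_{A_k^i}|\right)+C\int_{A_k^i}\int_{A_k^i}\Phi\left(|D_su(x,y)|x_N^{\alpha_1}y_N^{\alpha_2}\right)d\mu.
\end{equation*}
On $A_k$ we have $x_N\asymp 2^kT$, so $\ln(2T/x_N)\asymp (-k+1)$ for $k\le -1$; hence after summing over $i$ and dividing by $\ln^{p^+_\Phi}(2T/x_N)$ it suffices to control $\sum_{k=m}^{-1}\frac{2^{kN}a_k}{(-k+1)^{p^+_\Phi}}$, where $a_k=\sum_{i=1}^{\sigma_k}\Phi\big(2^{-k\tau}|(u)_{A_k^i}|\big)$, by the right-hand side, uniformly as $m\to-\infty$.

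Next I would run the same averaging-transfer argument as in Lemma \ref{lem3}: using the triangle inequality, Lemma \ref{Lem-H-p3} with parameter $\lambda=\Lambda$, the scaling bound \eqref{H3-p3} (which produces the factor $2^{\tau p^+_\Phi}=2$ when passing from level $k$ to level $k+1$, using $\tau p^+_\Phi=1$), and Lemma \ref{Lem-avg}, together with the telescoping identities \eqref{eqal-p3}–\eqref{eqn2-lem1-3-p3}, to obtain
\begin{equation*}
2^{kN}a_k\le 2^{N-1}\,2^{kN}\,2^{\tau p^+_\Phi}\,\Lambda\, 2^{-N}\,2^{(k+1)N}a_{k+1}\cdot 2^{-N}+C\int_{A_k\cup A_{k+1}}\int_{A_k\cup A_{k+1}}\Phi\left(|D_su(x,y)|x_N^{\alpha_1}y_N^{\alpha_2}\right)d\mu,
\end{equation*}
i.e. an estimate of the shape $2^{kN}a_k\le \Lambda\,2^{-1}\cdot 2^{(k+1)N}a_{k+1}+Cb_k$ with $b_k$ the Gagliardo energy on $A_k\cup A_{k+1}$. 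Now the divergent weight $(-k+1)^{-p^+_\Phi}$ must be inserted. Multiplying through by $(-k+1)^{-p^+_\Phi}$ and summing from $k=m$ to $-2$, the key point is that $\sum_k (-k+1)^{-p^+_\Phi} 2^{(k+1)N}a_{k+1}=\sum_k (-(k+1))^{-p^+_\Phi}\big(1+\frac{1}{-(k+1)}\big)^{-p^+_\Phi}\cdot(-(k+1))^{p^+_\Phi}\cdots$ — more carefully, since $\tfrac{(-k+1)^{-p^+_\Phi}}{(-k)^{-p^+_\Phi}}=\big(1-\tfrac1{-k+1}\big)^{p^+_\Phi}\to 1$ as $k\to-\infty$, one loses only a bounded multiplicative constant when re-indexing the shifted sum back; but that constant is $1+o(1)$ rather than a fixed number strictly less than needed, so one needs the extra slack $2^{-1}$ coming from $2^{\tau p^+_\Phi-1}=2^0\cdot\frac12$? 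Actually $2^{\tau p^+_\Phi-1}=1/2<1$ here exactly, which is the source of the convergence — this is the analogue of choosing $\Lambda$ with $2^{sp^+_\Phi-1}\Lambda<1$ in Lemma \ref{lem3}. Concretely: choose $\Lambda>1$ close enough to $1$ that $2^{\tau p^+_\Phi-1}\Lambda\cdot\sup_{k\le -2}\frac{(-k+1)^{-p^+_\Phi}}{(-k)^{-p^+_\Phi}}<1$; since the supremum equals $1$ (attained in the limit, and the ratios are all $\le 1$), such a $\Lambda$ exists because $2^{\tau p^+_\Phi-1}=\tfrac12<1$. With that choice the weighted geometric recursion closes: changing sides, re-indexing and rearranging exactly as in Lemma \ref{lem3} gives
\begin{equation*}
\Big(1-2^{\tau p^+_\Phi-1}\Lambda\Big)\sum_{k=m}^{-1}\frac{2^{kN}a_k}{(-k+1)^{p^+_\Phi}}\le \frac{a_{-1}}{2^N}+C\sum_{k=m}^{-2}\int_{A_k\cup A_{k+1}}\int_{A_k\cup A_{k+1}}\Phi\left(|D_su(x,y)|x_N^{\alpha_1}y_N^{\alpha_2}\right)d\mu,
\end{equation*}
where the boundary term $a_{-1}$ is bounded by $C\int_{\Omega_{n,T}}\Phi(|u(x)|)dx$ via Jensen and \eqref{H3-p3}, and the nonnegative term $\Lambda 2^{mN}a_m(\cdots)$ is discarded. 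Finally I would assemble: sum the first displayed inequality over $i$ and over $k=m,\dots,-1$, divide each $A_k$-piece by $\ln^{p^+_\Phi}(2T/x_N)\asymp(-k+1)^{p^+_\Phi}$, plug in the bound just obtained for $\sum 2^{kN}a_k/(-k+1)^{p^+_\Phi}$, absorb the Gagliardo pieces $\int_{A_k}\int_{A_k}$ and $\int_{A_k\cup A_{k+1}}\int_{A_k\cup A_{k+1}}$ into $\int_{\Omega_{n,T}}\int_{\Omega_{n,T}}$ (noting each pair of levels is counted a bounded number of times and that on $A_k$ the weight $\ln^{-p^+_\Phi}(2T/x_N)\le 1$ anyway since $x_N<T$), and let $m\to-\infty$.

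The main obstacle, and the step deserving the most care, is the re-indexing of the weighted sum: one must verify that inserting the slowly varying weight $(-k+1)^{-p^+_\Phi}$ into the recursion $2^{kN}a_k\le 2^{\tau p^+_\Phi-1}\Lambda\,2^{(k+1)N}a_{k+1}+Cb_k$ still yields a convergent (in $m$) telescoped bound. This works only because the purely geometric factor $2^{\tau p^+_\Phi-1}$ equals $\tfrac12<1$ — there is genuine room to absorb both the constant from Lemma \ref{Lem-H-p3} (via $\Lambda\to 1^+$) and the ratio $(-k+1)^{-p^+_\Phi}/(-k)^{-p^+_\Phi}\le 1$ of consecutive log-weights. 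A secondary technical point is the comparison $\ln(2T/x_N)\asymp (-k+1)$ on $A_k$ with constants independent of $k$, which is immediate from $2^kT\le x_N<2^{k+1}T$, and the fact that $\Phi$ is monotone so the weight can be pulled out of the integral over $A_k$ up to the $\asymp$-constants using \eqref{H3-p3}.
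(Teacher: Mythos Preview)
Your argument has a genuine gap at the very step you flag as ``the main obstacle.'' You assert that the geometric factor in the recursion is $2^{\tau p^+_\Phi-1}=\tfrac12$, but with $\tau p^+_\Phi=1$ this factor is $2^{0}=1$, not $\tfrac12$. Tracing the bookkeeping as in Lemma~\ref{lem3} (with $s$ replaced by $\tau$) one gets
\[
2^{kN}a_k\ \le\ 2^{\tau p^+_\Phi-1}\Lambda\,2^{(k+1)N}a_{k+1}+Cb_k\ =\ \Lambda\,2^{(k+1)N}a_{k+1}+Cb_k,
\]
so the amplification factor is exactly $\Lambda>1$. Inserting the slowly varying weight $(-k+1)^{-p^+_\Phi}$ does not rescue this: the ratio $(-k+1)^{-p^+_\Phi}/(-k)^{-p^+_\Phi}$ has supremum equal to $1$ (attained as $k\to-\infty$), so there is no room at all to absorb any fixed $\Lambda>1$. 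And you cannot simply let $\Lambda\to1^+$, because the constant $C/(\Lambda-1)^{p^+_\Phi-1}$ from Lemma~\ref{Lem-H-p3} then blows up uniformly in $k$. This is precisely the criticality of the case $\tau p^+_\Phi=1$.

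The paper's remedy is to make $\lambda$ depend on $k$: take $\lambda(k)=\big(\frac{-k}{-k-1/2}\big)^{p^+_\Phi-1}\in(1,2^{p^+_\Phi})$, so that $\lambda(k)-1\asymp(-k)^{-1}$ and hence the error term carries a factor $(-k)^{p^+_\Phi-1}$. Multiplying the pointwise inequality by $2^{kN}/(-k)^{p^+_\Phi-1}$ and summing over the $2^{N-1}$ children and then over $j$ turns the recursion into
\[
\frac{2^{kN}a_k}{(-k)^{p^+_\Phi-1}}\ \le\ \frac{2^{(k+1)N}a_{k+1}}{(-k-1/2)^{p^+_\Phi-1}}+Cb_k,
\]
which now telescopes after summing in $k$: the consecutive difference $(-k)^{-(p^+_\Phi-1)}-(-k+1/2)^{-(p^+_\Phi-1)}\asymp(-k)^{-p^+_\Phi}$ produces exactly the logarithmic weight needed on the left-hand side (cf.\ \eqref{eqn-p3}), and the boundary term $a_{-1}$ is controlled by $\int_{\Omega_{n,T}}\Phi(|u|)\,dx$ as you indicated. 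Everything else in your outline (use of Lemma~\ref{lemm-1}, Lemma~\ref{Lem-avg}, \eqref{eqal-p3}--\eqref{eqn2-lem1-3-p3}, Jensen for $a_{-1}$, $m\to-\infty$) is correct; only the choice of $\lambda$ needs to be made $k$-dependent in the way above.
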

    \begin{proof}
For each $x_N\in A_k^i,$ we have $ x_N<2^{k+1}T
$ which implies $\ln \left(2T/x_N\right)>(-k)\ln 2$ for all $k\in\mathbb{Z}^-.$ Now, applying Lemma \ref{lemm-1} and using $(-k)^{p^+_\Phi }\ge 1$ for all $k\in\mathbb{Z}^-$, we obtain
\begin{multline*}
    \int_{A_k^i}\Phi\left(\frac{|u(x)|}{x_N^\tau}\right)\frac{dx}{\ln^{p^+_\Phi}(2T/x_N)} \le \frac{1}{(-k\ln 2)^{p^+_\Phi}} \int_{A_k^i}\Phi\left(\frac{|u(x)|}{x_N^\tau}\right)dx\\\le \frac{C2^{kN}}{(-k)^{p^+_\Phi }}\Phi\left(2^{-k\tau}|(u)_{A_k^i}|\right)+C\int_{A_k^i}\int_{A_k^i} \Phi\left(|D_su(x,y)|x_N^{\alpha_1}y_N^{\alpha_2}\right)d\mu.
\end{multline*}  
 Summing the above inequalities from $i=1$ to $\sigma_k,$ we obtain  
\begin{multline*}
    \int_{A_k}\Phi\left(\frac{|u(x)|}{x_N^\tau}\right)\frac{dx}{\ln^{p^+_\Phi}(2T/x_N)}\le \frac{C2^{kN}}{(-k)^{p^+_\Phi }}\sum_{i=1}^{\sigma_k}\Phi\left(2^{-k\tau}|(u)_{A_k^i}|\right)\\+C\int_{A_k}\int_{A_k} \Phi\left(|D_su(x,y)|x_N^{\alpha_1}y_N^{\alpha_2}\right)d\mu.
\end{multline*}
Let $m\in\mathbb{Z}$ such that $m\le -2,$ and for simplicity, let $a_k=\sum_{i=1}^{\sigma_{k}}\Phi\left(2^{-k\tau}|(u)_{A_k^i}|\right).$ Then, summing the above inequalities from $k=m$ to $-1$, we get 
\begin{equation}\label{thm3-eq1-p3}
\sum_{k=m}^{-1}\int_{A_k}\Phi\left(\frac{|u(x)|}{x_N^\tau}\right)\frac{dx}{\ln^{p^+_\Phi}(2T/x_N)}\le C\sum_{k=m}^{-1}\frac{2^{kN}}{(-k)^{p^+_\Phi }}a_k+C\sum_{k=m}^{-1}\int_{A_k}\int_{A_k} \Phi\left(|D_su(x,y)|x_N^{\alpha_1}y_N^{\alpha_2}\right)d\mu.
\end{equation}
Next, we estimate the first term on the right-hand side of \eqref{thm3-eq1-p3}. Let $A_{k+1}^j$ be a cube such that $A_k^i$ lies below it. Then, by using the triangular inequality and Lemma \ref{Lem-H-p3} with $\Lambda=2^{p^+_\Phi }$, we get  $C=C(\Phi,\Lambda)>0$ satisfying 
 \begin{align*}
 \Phi\left(2^{-k\tau}|(u)_{A_k^i}|\right)&\leq\Phi \left(2^{-k\tau}|(u)_{A_{k+1}^j}|+2^{-k\tau}\left|(u)_{A_k^i}- (u)_{A_{k+1}^j}\right|\right)\\&\leq \lambda\Phi\left(2^{-k\tau}|(u)_{A_{k+1}^j}|\right)+\frac{C}{\left(\lambda-1\right)^{p^+_\Phi -1}}\Phi\left(2^{-k\tau}\left|(u)_{A_k^i}- (u)_{A_{k+1}^j}\right|\right),
 \end{align*}
 for every $\lambda\in(1,2^{p^+_\Phi })$. Now we use
 \eqref{H3-p3} and $\tau p^+_\Phi= 1$
 to yield
 \begin{align*}
     \Phi\left(2^{-k\tau}|(u)_{A_{k+1}^j}|\right)\le \max\{2^{\tau p^-_\Phi},2^{\tau p_\Phi^+}\}\Phi\left(2^{-(k+1)\tau}|(u)_{A_{k+1}^j}|\right)\leq 2\Phi\left(2^{-(k+1)\tau}|(u)_{A_{k+1}^j}|\right).
\end{align*} 
 By combining the above two inequalities and using Lemma \ref{Lem-avg}, we obtain
 \begin{multline}\label{thm3-log-eqn4*}
 \Phi\left(2^{-k\tau}|(u)_{A_k^i}|\right)\leq 2\lambda\Phi\left(2^{-(k+1)\tau}|(u)_{A_{k+1}^j}|\right) \\+ \frac{C}{2^{kN}\left(\lambda-1\right)^{p^+_\Phi -1}}\int_{A_k^i\cup A_{k+1}^j}\int_{A_k^i\cup A_{k+1}^j}\Phi\left(|D_su(x,y)|x_N^{\alpha_1}y_N^{\alpha_2}\right)d\mu.
 \end{multline}
Now, for each $k\in \mathbb{Z}^-$, we choose 
  $$\lambda(k):=\left(\frac{-k}{-k-1/2}\right)^{p^+_\Phi -1}.$$ For these choices of $\lambda$, we can verify that $$\lambda\in (1,2^{p^+_\Phi }), \quad(\lambda-1)\asymp (-k)^{-1}.$$
Consequently, from \eqref{thm3-log-eqn4*} we get
 \begin{multline*}
     \Phi\left(2^{-k\tau}|(u)_{A_k^i}|\right)\leq 2\left(\frac{-k}{-k-1/2}\right)^{p^+_\Phi -1}\Phi\left(2^{-(k+1)\tau}|(u)_{A_{k+1}^j}|\right)\\+C \frac{(-k)^{p^+_\Phi -1}}{2^{kN}}  \int_{A_k^i\cup A_{k+1}^j}\int_{A_k^i\cup A_{k+1}^j}\Phi\left(|D_su(x,y)|x_N^{\alpha_1}y_N^{\alpha_2}\right)d\mu.
 \end{multline*}
 This gives
 \begin{multline*}
 \frac{2^{kN}}{(-k)^{p^+_\Phi -1}}\Phi\left(2^{-k\tau}|(u)_{A_k^i}|\right)\leq \frac{2^{(kN+1)}}{(-k-1/2)^{p^+_\Phi -1}}\Phi\left(2^{ -(k+1)\tau}|(u)_{A_{k+1}^j}|\right)\\+C\int_{A_k^i\cup A_{k+1}^j}\int_{A_k^i\cup A_{k+1}^j}\Phi\left(|D_su(x,y)|x_N^{\alpha_1}y_N^{\alpha_2}\right)d\mu.
 \end{multline*}
Observe that there are $2^{N-1}$ such cubes $A_k^i$ lying below the cube $A_{k+1}^j$. Therefore, summing the above inequalities from $i=2^{N-1}(j-1)+1$ to $2^{N-1}j,$ we obtain 
 \begin{multline*}
\frac{2^{kN}}{(-k)^{p^+_\Phi -1}} \sum^{2^{N-1}j}_{i=2^{N-1}(j-1)+1} \Phi\left(2^{-k\tau}|(u)_{A_k^i}|\right)\leq \frac{2^{N(k+1)}}{(-k-1/2)^{p^+_\Phi -1}}\Phi\left(2^{ -(k+1)\tau}|(u)_{A_{k+1}^j}|\right)\\+C\sum^{2^{N-1}j}_{i=2^{N-1}(j-1)+1}\int_{A_k^i\cup A_{k+1}^j}\int_{A_k^i\cup A_{k+1}^j}\Phi\left(|D_su(x,y)|x_N^{\alpha_1}y_N^{\alpha_2}\right)d\mu.
 \end{multline*}
 Again, summing the above inequalities from $j=1$ to $\sigma_{k+1}$ and using \eqref{eqal-p3}, 
 we obtain 
\begin{equation}\label{STAR}
    \frac{2^{kN}a_k}{(-k)^{p^+_\Phi -1}}\leq \frac{2^{N(k+1)}a_{k+1}}{(-k-1/2)^{p^+_\Phi -1}}+C\int_{A_k\cup A_{k+1}}\int_{A_k\cup A_{k+1}}\Phi\left(|D_su(x,y)|x_N^{\alpha_1}y_N^{\alpha_2}\right)d\mu,\quad k\in \mathbb{Z}^-.
\end{equation}
Sum up the above inequalities from $k=m$ to $-2$ to obtain
\begin{equation*}
    \sum_{k=m}^{-2} \frac{2^{kN}}{(-k)^{p^+_\Phi -1}}a_k\leq \sum_{k=m}^{-2}\frac{2^{N(k+1)}}{(-k-1/2)^{p^+_\Phi -1}}a_{k+1}+C \int_{\Omega_{n,T}}\int_{\Omega_{n,T}}\Phi(|D_su(x,y)|x_N^{\alpha_1}y_N^{\alpha_2})d\mu.
\end{equation*}
By changing sides and re-indexing, we get  
\begin{multline*}
   \frac{2^{mN}a_m}{(-m)^{p^+_\Phi -1}}+ \sum_{k=m+1}^{-2} \left\{\frac{1}{(-k)^{p^+_\Phi -1}}-\frac{1}{(-k+1/2)^{p^+_\Phi -1}}\right\}2^{kN}a_{k} \\\le\left(\frac{2}{3}\right)^{p^+_\Phi-1}2^{-N}a_{-1}+C\int_{\Omega_{n,T}}\int_{\Omega_{n,T}}\Phi(|D_su(x,y)|x_N^{\alpha_1}y_N^{\alpha_2})d\mu.
\end{multline*}
Now using $$\frac{1}{(-k)^{p^+_\Phi -1}}-\frac{1}{(-k+1/2)^{p^+_\Phi -1}}\asymp \frac{1}{(-k)^{p^+_\Phi }},\qquad \frac{1}{(-m)^{p_\Phi^+}}\le \frac{1}{(-m)^{p_\Phi^+-1}},$$ and adding $2^{-N}a_{-1}$ on both sides, we get 
\begin{equation}\label{eqn-p3}
   \sum_{k=m}^{-1} \frac{2^{kN}}{(-k)^{p^+_\Phi }}a_{k}\le Ca_{-1}+C \int_{\Omega_{n,T}}\int_{\Omega_{n,T}}\Phi(|D_su(x,y)|x_N^{\alpha_1}y_N^{\alpha_2})d\mu.
\end{equation}
Further, use Jensen's inequality and \eqref{H3-p3} to get
\begin{align*}
    a_{-1}= \sum_{i=1}^{\sigma_{-1}}\Phi\left(2^{\tau}|(u)_{A_{-1}^i}|\right)\le C\sum_{i=1}^{\sigma_{-1}}\int_{{A_{-1}^i}}\Phi(|u(x)|)dx\le C\int_{\Omega_{n,T}}\Phi(|u(x)|)dx.
\end{align*}
Hence, the result follows from \eqref{thm3-eq1-p3} by taking $m\to -\infty$.
\end{proof}
Next, we prove a Hardy-type inequality within the domain $B_R(0)^c$. This lemma will be proved by an analogous modification of the proof of Lemma \ref{lem2-p3}. It is a key lemma for establishing $(ii)$ of Theorem \ref{Orlicz-Hardy2*-p3}.
\begin{lem}\label{ext}
Let $N\ge 1,\,s\in (0,1),\,R>0,$ and $\Phi\in \D$ be a Young function. If  $sp^-_\Phi =N,$ then there exists  $C=C(s,N,R,\Phi)>0$ such that for every $u\in \mathcal{C}_c^1(B_R(0)^c),$
\begin{equation*}
        \int_{B_R(0)^c}\Phi\left(\frac {|u(x)|}{|x|^s}\right) \frac{dx}{\ln^{p_\Phi^+}\left(2|x|/R\right)}\le C\int_{B_R(0)^c}\int_{B_R(0)^c}\Phi(|D_su(x,y)|)d\mu+C \int_{B_R(0)^c}\Phi(|u(x)|)dx.
\end{equation*}
\end{lem}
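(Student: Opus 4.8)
The plan is to adapt the proof of Lemma \ref{lem2-p3}, with the dyadic strips near the flat boundary replaced by dyadic annuli around the sphere $\partial B_R(0)$. For $k\ge 0$ put $A_k:=\{x\in\mathbb{R}^N:2^kR\le |x|<2^{k+1}R\}$, so that $B_R(0)^c=\bigcup_{k\ge0}A_k$, on $A_k$ one has $|x|\asymp 2^kR$ and $\ln(2|x|/R)\ge(k+1)\ln 2$, and --- crucially --- $A_k$ is the dilation by $2^k$ of the fixed annulus $\{R\le|x|<2R\}$. Since this reference annulus has bounded aspect ratio, Proposition \ref{poincare} applies directly on each $A_k$ and on each $A_k\cup A_{k+1}$ (a dilation of $\{R\le|x|<4R\}$) with a uniform constant, so, unlike in Lemma \ref{lem2-p3}, there is no need to subdivide the annuli into cubes; for $N=1$, when $B_R(0)^c$ is a union of two half-lines, one simply runs the argument on each half-line separately. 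Note also $p^+_\Phi\ge p^-_\Phi=N/s>1$ since $s<1$, so all the powers $p^+_\Phi-1$ below are positive.

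Put $a_k:=\Phi(2^{-ks}|(u)_{A_k}|)$. Arguing exactly as in Lemma \ref{lemm-1} with $\alpha_1=\alpha_2=0$ (Proposition \ref{poincare} on $A_k$, inequality \eqref{H2-p3}, and replacing $u$ by $2^{-ks}u$), one obtains
\[
\int_{A_k}\Phi\!\left(\frac{|u(x)|}{|x|^s}\right)dx\le C\,2^{kN}a_k+C\int_{A_k}\int_{A_k}\Phi(|D_su(x,y)|)\,d\mu .
\]
Dividing by $\ln^{p^+_\Phi}(2|x|/R)\ge((k+1)\ln2)^{p^+_\Phi}$ on $A_k$ and summing over $k\ge0$ (the $A_k$ are pairwise disjoint, so the seminorm blocks overlap with bounded multiplicity) reduces the lemma to the estimate
\[
\sum_{k\ge0}\frac{2^{kN}}{(k+1)^{p^+_\Phi}}\,a_k\le C\int_{B_R(0)^c}\Phi(|u(x)|)\,dx+C\int_{B_R(0)^c}\int_{B_R(0)^c}\Phi(|D_su(x,y)|)\,d\mu .
\]

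The recursion for the $a_k$ is the heart of the matter, and it must be run \emph{outward} --- from the sphere towards infinity --- which is the main departure from Lemma \ref{lem2-p3}. Using the triangle inequality for $|(u)_{A_{k+1}}|$ and Lemma \ref{Lem-H-p3}, for $\lambda\in(1,2^{p^+_\Phi-1}]$,
\[
a_{k+1}\le\lambda\,\Phi\!\left(2^{-(k+1)s}|(u)_{A_k}|\right)+\frac{C}{(\lambda-1)^{p^+_\Phi-1}}\,\Phi\!\left(2^{-(k+1)s}\bigl|(u)_{A_k}-(u)_{A_{k+1}}\bigr|\right).
\]
For the first term I apply \eqref{H3-p3} with the factor $2^{-s}<1$, which brings in the \emph{lower} index $p^-_\Phi$, together with the critical hypothesis $sp^-_\Phi=N$, to get $\Phi(2^{-(k+1)s}|(u)_{A_k}|)\le 2^{-sp^-_\Phi}a_k=2^{-N}a_k$; this factor $2^{-N}$ is precisely what cancels the volume ratio $|A_{k+1}|/|A_k|\asymp 2^N$ in the telescoping, playing the role that $\tau p^+_\Phi=1$ and the counting of the $2^{N-1}$ subcubes played in Lemma \ref{lem2-p3}. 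The cross term is controlled by Lemma \ref{Disjointset} (with $P=A_k$, $Q=A_{k+1}$), Proposition \ref{poincare} on $A_k\cup A_{k+1}$, and replacing $u$ by $2^{-(k+1)s}u$, which gives $\Phi(2^{-(k+1)s}|(u)_{A_k}-(u)_{A_{k+1}}|)\le \frac{C}{2^{kN}}\int_{A_k\cup A_{k+1}}\int_{A_k\cup A_{k+1}}\Phi(|D_su|)\,d\mu$. Multiplying the recursion by $2^{(k+1)N}/(k+1)^{p^+_\Phi-1}$ and choosing $\lambda=\lambda(k):=\bigl((k+1)/(k+1/2)\bigr)^{p^+_\Phi-1}\in(1,2^{p^+_\Phi-1}]$ (so that $\lambda(k)-1\asymp(k+1)^{-1}$) yields
\[
\frac{2^{(k+1)N}}{(k+1)^{p^+_\Phi-1}}\,a_{k+1}\le\frac{2^{kN}}{(k+1/2)^{p^+_\Phi-1}}\,a_k+C\int_{A_k\cup A_{k+1}}\int_{A_k\cup A_{k+1}}\Phi(|D_su(x,y)|)\,d\mu,\qquad k\ge0 .
\]
Summing this from $k=0$ to a large $M$, re-indexing the left side by $k+1$, using $\frac{1}{k^{p^+_\Phi-1}}-\frac{1}{(k+1/2)^{p^+_\Phi-1}}\asymp k^{-p^+_\Phi}$ and discarding the nonnegative tail term $2^{MN}M^{1-p^+_\Phi}a_M$, and finally estimating $a_0=\Phi(|(u)_{A_0}|)\le \frac{1}{|A_0|}\int_{A_0}\Phi(|u|)\,dx$ by Jensen's inequality (note $|A_0|$ is a fixed positive constant), one gets the displayed estimate, hence the lemma.

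The genuine obstacle here is conceptual rather than computational: one has to realize that the chain must be walked outward (so the recursion is of the form $a_{k+1}\lesssim a_k+\text{error}$), and that in this direction it is the lower index $p^-_\Phi$ --- accessed through the $a<1$ branch of \eqref{H3-p3} and the hypothesis $sp^-_\Phi=N$ --- that produces the decay $2^{-N}$ balancing the growth of the annular volumes. Once this is in place, the telescoping, the choice of $\lambda(k)$, and the bookkeeping with the logarithmic power $p^+_\Phi$ are routine modifications of the argument of Lemma \ref{lem2-p3}.
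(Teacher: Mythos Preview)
Your proof is correct and follows essentially the same route as the paper's: the same dyadic annuli $A_k$, the same reduction via Proposition~\ref{poincare} and \eqref{H2-p3} to a sum $\sum_k 2^{kN}(k+1)^{-p^+_\Phi}a_k$, the same outward recursion via Lemma~\ref{Lem-H-p3}, the same crucial use of \eqref{H3-p3} with $a=2^{-s}<1$ and $sp^-_\Phi=N$ to produce the factor $2^{-N}$, and the same telescoping with $\lambda(k)-1\asymp (k+1)^{-1}$ (the paper picks $\lambda(k)=((k+2)/(k+3/2))^{p^+_\Phi-1}$, a harmless shift of your choice). Your remark that for $N=1$ one should treat the two half-lines separately is a clarification the paper omits; otherwise the arguments coincide.
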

    \begin{proof}
          For each $k\in \mathbb{Z}$, set $$A_k=\{x\in \mathbb{R}^N:2^kR\le |x|<2^{k+1}R\}.$$ Then, we have $$B_R(0)^c=\bigcup_{k=0}^{\infty}A_k.$$ 
          For $x\in A_k$, we have $|x|\ge 2^{k}R,$ which implies 
$$|x|^s\ge 2^{ks}R^{s},\;\;\;\; \ln^{p^+_\Phi}(2|x|/R)\ge (k+1)^{p_\Phi^+}(\ln  2)^{p_\Phi^+}.$$
Therefore, using \eqref{H3-p3}, we get 
\begin{equation}\label{eqn3-3-p3}
   \int_{A_k}\Phi\left(\frac{|u(x)|}{|x|^{s}}\right)\frac{dx}{\ln^{p^+_\Phi}  \left(2|x|/R\right)}\le\frac{\max\{R^{-sp_\Phi^-},R^{-sp_\Phi^+}\}}{(\ln  2)^{p_\Phi^+}(k+1)^{p^+_\Phi }}\int_{A_k}\Phi\left(2^{-ks}|u(x)|\right)dx.
\end{equation}
Next, we estimate the right-hand side of \eqref{eqn3-3-p3}.
Using the triangular inequality and \eqref{H2-p3}, we get 
\begin{align*}
  \Phi(|u(x)|)\le \Phi(|(u)_{A_k}|+|u(x)-(u)_{A_k}|)\leq  2^{p^+_\Phi }\Phi(|(u)_{A_k}|)+2^{p^+_\Phi } \Phi(|u(x)-(u)_{A_k}|).
\end{align*}
Applying Proposition \ref{poincare} with $\Omega=\{x\in \mathbb{R}^N:R<|x|<2R\}$ and $\lambda=2^k$, we obtain
\begin{equation*}
    \int_{A_k}\Phi(|u(x)-(u)_{A_k}|)dx\leq  C \int_{A_k}\int_{A_k}\Phi\left(2^{ks}|D_su(x,y)|\right)d\mu,
\end{equation*}
where $C=C(s,N,R,\Phi)>0$ is a constant. 
By combining the above two inequalities, we obtain
\begin{equation*}
  \int_{A_k}\Phi(|u(x)|)dx\leq 2^{p^+_\Phi }|A_k|\Phi\left(|(u)_{A_k}|\right)+ 2^{p^+_\Phi } C\int_{A_k}\int_{A_k}\Phi\left(2^{ks}|D_su(x,y)|\right)d\mu.
\end{equation*}
Now replace $u$ by $2^{-ks}u$ in the above inequality to obtain
\begin{equation*}
  \int_{A_k}\Phi\left(2^{-ks}|u(x)|\right)dx\leq 2^{p^+_\Phi }|A_k|\Phi\left(2^{-ks}|(u)_{A_k}|\right)+ 2^{p^+_\Phi } C\int_{A_k}\int_{A_k}\Phi\left(|D_su(x,y)|\right)d\mu.
\end{equation*}
Thus, combining \eqref{eqn3-3-p3} with the above inequality and using  $(k+1)^{p_\Phi^+}>1$, we obtain
\begin{equation*}
   \int_{A_k}\Phi\left(\frac{|u(x)|}{|x|^{s}}\right)\frac{dx}{\ln^{p^+_\Phi}  \left(2|x|/R\right)} \leq \frac{C2^{kN}}{(k+1)^{p^+_\Phi }}\Phi\left(2^{-ks}|(u)_{A_k}|\right) + C\int_{A_k}\int_{A_k}\Phi\left(|D_su(x,y)|\right)d\mu.
\end{equation*} 
Let $m\in\mathbb{Z}$ such that $m>1.$ Summing the above inequalities from $k=0$ to $m$, we get
\begin{multline}
    \sum_{k=0}^{m}\int_{A_k}\Phi\left(\frac{|u(x)|}{|x|^{s}}\right)\frac{dx}{\ln^{p^+_\Phi}  \left(2|x|/R\right)} \leq C\sum_{k=0}^{m}\frac{2^{kN}}{(k+1)^{p^+_\Phi }}\Phi\left(2^{-ks}|(u)_{A_k}|\right)\\ + C\int_{B_R(0)^c}\int_{B_R(0)^c}\Phi\left(|D_su(x,y)|\right)d\mu.\label{log-eq3-p3}
\end{multline} 
We next estimate the first term on the right-hand side of \eqref{log-eq3-p3}.  
By triangular inequality and Lemma \ref{Lem-H-p3} with $\Lambda=2^{p^+_\Phi }$,  for every $\lambda\in (1,2^{p^+_\Phi })$, we get 
\begin{multline*}
    \Phi\left(2^{-(k+1)s}|(u)_{A_{k+1}}|\right)\leq  \Phi\left(2^{-(k+1)s}|(u)_{A_{k}}|+2^{-(k+1)s}\left|(u)_{A_{k+1}}- (u)_{A_k}\right|\right)\\\leq \lambda\Phi\left(2^{-(k+1)s}|(u)_{A_{k}}|\right)+\frac{C}{\left(\lambda-1\right)^{p^+_\Phi -1}}\Phi\left(2^{-(k+1)s}\left|(u)_{A_{k+1}}-(u)_{A_k}\right|\right).
\end{multline*}
 Now use  \eqref{H3-p3} and $sp^-_\Phi=N$ to get
\begin{align*}
    \Phi\left(2^{-(k+1)s}|(u)_{A_{k}}|\right)\le 2^{-sp^-_\Phi }\Phi\left( 2^{ -ks}|(u)_{A_{k}}|\right)=  2^{-N}\Phi\left(2^{ -ks}|(u)_{A_{k}}|\right).
\end{align*} 
Further, applying Lemma \ref{Disjointset} with $P= A_k$ and $Q = A_{k+1}$, and using Proposition \ref{poincare} with $\Omega=\{x\in \mathbb{R}^N:R<|x|<4R\}$ and $\lambda=2^k,$ we get a constant $C>0$ (independent of $k$) so that
\begin{align*}
\Phi\left(\left|(u)_{A_{k+1}}- (u)_{A_k}\right|\right)&\leq \frac{C}{2^{kN}}\int_{A_k\cup A_{k+1}}\int_{A_k\cup A_{k+1}}\Phi\left(2^{ks}|D_su(x,y)|\right)d\mu.
  \end{align*}  
 By combining the above three inequalities, for every $\lambda\in (1,2^{p^+_\Phi })$, we obtain
\begin{multline*}
\Phi\left(2^{-(k+1)s}|(u)_{A_{k+1}}|\right)\leq \lambda 2^{-N}\Phi\left(2^{-ks}|(u)_{A_{k}}|\right) \\+ \frac{C}{2^{kN}\left(\lambda-1\right)^{p^+_\Phi -1}}\int_{A_k\cup A_{k+1}}\int_{A_k\cup A_{k+1}}\Phi\left(|D_su(x,y)|\right)d\mu.
\end{multline*}
Now, for each $ k\ge 0$, we choose
 $$\lambda(k):=\left(\frac{k+2}{k+3/2}\right)^{p^+_\Phi -1}.$$ Then, one can verify that   $\lambda\in (1,2^{p^+_\Phi })$ and $(\lambda-1)\asymp (k+2)^{-1}.$
Consequently, 
\begin{multline*}
\Phi\left(2^{ -(k+1)s}|(u)_{A_{k+1}}|\right)\leq 2^{-N}\left(\frac{k+2}{k+3/2}\right)^{p^+_\Phi -1}\Phi\left(2^{-ks}|(u)_{A_{k}}|\right)\\+C\frac{(k+2)^{p^+_\Phi -1}}{2^{kN}}  \int_{A_k\cup A_{k+1}}\int_{A_k\cup A_{k+1}}\Phi\left(|D_su(x,y)|\right)d\mu.
\end{multline*}
Multiplying the above inequality by $2^{(k+1)N}(k+2)^{1-p^+_\Phi}$ and then take the sum to obtain
\begin{multline*}
\sum_{k=0}^{m}\frac{2^{(k+1)N}}{(k+2)^{p^+_\Phi -1}}\Phi\left(2^{-(k+1)s}|(u)_{A_{k+1}}|\right)\leq \sum_{k=0}^{m} \frac{2^{kN}}{(k+3/2)^{p^+_\Phi -1}}\Phi\left(2^{-ks}|(u)_{A_{k}}|\right)\\+C2^N\int_{B_R(0)^c}\int_{B_R(0)^c}\Phi\left(|D_su(x,y)|\right)d\mu.
\end{multline*}
By changing sides and re-indexing, we get
\begin{multline*}
   \frac{2^{(m+1)N}}{(m+2)^{p^+_\Phi -1}}\Phi\left(2^{-(m+1)s}|(u)_{A_{m+1}}|\right)+ \sum_{k=1}^{m}\left\{\frac{2^{kN}}{(k+1)^{p^+_\Phi -1}}-\frac{2^{kN}}{(k+3/2)^{p^+_\Phi -1}}\right\}\Phi\left(2^{-ks}|(u)_{A_k}|\right)\\\leq\left(\frac{2}{3}\right)^{p^+_\Phi-1} \Phi(|(u)_{A_0}|)+ C\int_{B_R(0)^c}\int_{B_R(0)^c}\Phi\left(|D_su(x,y)|\right)d\mu.
\end{multline*} 
Clearly, the first term on the left-hand side of the above inequality is positive.
 Thus, adding $\Phi(|(u)_{A_0}|)$ to both sides of the above inequality and using 
\begin{equation*}
    \frac{1}{(k+1)^{p^+_\Phi -1}}-\frac{1}{(k+3/2)^{p^+_\Phi -1}}\asymp  \frac{1}{(k+1)^{p^+_\Phi }},\quad\forall\,k\in\mathbb{Z}^+,
\end{equation*}
 we obtain
$$\sum_{k=0}^{m}\frac{2^{kN}}{(k+1)^{p^+_\Phi }}\Phi\left(2^{-ks}|(u)_{A_k}|\right)\le C\Phi(|(u)_{A_0}|)+C\int_{B_R(0)^c}\int_{B_R(0)^c}\Phi\left(|D_su(x,y)|\right)d\mu.$$
Further, by Jensen's inequality  $$\Phi(|(u)_{A_0}|)\leq \frac{1}{|A_0|}\int_{A_0}\Phi(|u(x)|)dx\le C\int_{B_R(0)^c}\Phi(|u(x)|)dx,$$
for some $C=(N,R).$ 
Hence, the result follows from \eqref{log-eq3-p3} by taking $m\to \infty$.
    \end{proof}

\section{Fractional Orlicz boundary Hardy-type inequalities}\label{orlicz-boundary-p3}
This section will prove all the theorems stated in the introduction. Before proving Theorem \ref{Orlicz-Hardy2-p3} and Theorem \ref{Orlicz-Hardy1*-p3}, we need to recall the definition of a Lipschitz domain $\Omega$ (see \cite[Page 581]{Dyda2007}). 

\smallskip

Assume that $\Omega$ is a bounded Lipschitz domain. Then for each $x\in \partial \Omega$ there exists $r^\prime_x>0,$ an isometry $T_x$ of $\mathbb{R}^N$, and a Lipschitz function $\gamma_x:\mathbb{R}^{N-1}\rightarrow \mathbb{R}$ so that 
$$T_x(\Omega)\cap B_{r_x^\prime}(T_x(x))=\{\xi:\xi_N>\gamma_x(\xi^\prime)\}\cap B_{r_x^\prime}(T_x(x)).$$ 
 Without
loss of generality, we can assume $T_x$ to be the identity map. Otherwise, we can work with the Lipschitz
domain $T_x(\Omega)$ and the point $T_x(x)$, then pull back the construction to the original domain $\Omega$ and the point
$x$ via $T^{-1}_x$. 
Then 
\begin{equation}\label{eqn1-s4p3}
\Omega\cap B_{r_x^\prime}(x)=\{\xi:\xi_N>\gamma_x(\xi^\prime)\}\cap B_{r_x^\prime}(x)    
\end{equation}
and 
$\partial\Omega\subset\cup_{x\in\partial\Omega}B_{r^\prime_x}(x)$.  Choose $0<r_x<1$ such that $r_x\le r^\prime_x$ and for all $y\in \Omega\cap B_{r_x}(x),$ there exists $z\in \partial\Omega\cap B_{r_x}(x)$ satisfying $\delta_\Omega(y)=|y-z|.$ Then $\partial\Omega\subset\cup_{x\in\partial\Omega}B_{r_x}(x).$ As $\partial\Omega$ is compact, there exist $x_1,\ldots,x_n\in \partial\Omega$ such that $$\partial\Omega\subset\bigcup_{i=1}^nB_{r_i}(x_i),\quad \text{where}\; r_i=r_{x_i}.$$

\smallskip

\noindent\textbf{Proof of Theorem \ref{Orlicz-Hardy2-p3}.}  Let $\Omega$ be a bounded Lipschitz domain, $sp^+_\Phi=1$, and $R\ge R_\Omega:=\sup\{\delta_\Omega(x):x\in\Omega\}$. Let  $u\in \mathcal{C}^1_c(\Omega)$ and   $\overline{\Omega_0}\subset \Omega$ such that $\Omega\subset \cup_{i=0}^n\Omega_i$, where $\Omega_i=B_{r_i}(x_i)$, $i\in\{1,\ldots,n\}.$  Let $\{\eta_i\}_{i=0}^n$ be the associate partition of unity. Then, we have
 \begin{equation}\label{parti}
u=\sum_{i=0}^nu_i,\;\;\text{where}\;\;u_i=\eta_iu     
 \end{equation}
and hence $supp(u_i)\subset \Omega\cap \Omega_i$.
Now using \eqref{H2-p3}, we obtain 
$$\int_{\Omega}\Phi\left(\frac{|u(x)|}{\delta^s_\Omega(x)}\right)\frac{dx}{\ln^{p^+_\Phi }(2R/\delta_\Omega(x))}\le C\sum_{i=0}^n \int_{\Omega}\Phi\left(\frac{|u_i(x)|}{\delta^s_\Omega(x)}\right)\frac{dx}{\ln^{p^+_\Phi }(2R/\delta_\Omega(x))}.$$ 
Further, applying Lemma \ref{lem-ext}, we obtain 
\begin{multline}\label{eqn1-thm1-3}
   \int_\Omega\Phi(|u_i(x)|)dx+ \int_{\Omega}\int_{\Omega}\Phi\left(|D_s(u_i)(x,y)|\right)d\mu\leq C\int_{\Omega}\int_{\Omega}\Phi\left(|D_su(x,y)|\right)d\mu\\+C\int_\Omega\Phi(|u(x)|)dx,\quad 0\le  i \le n.
\end{multline}
Thus, it is sufficient to prove the required inequality \eqref{2} for all $u_i$. Since $\overline{\Omega_0}\subset \Omega$, there exist positive constants $C_1$ and $C_2$ such that
\begin{equation}\label{eqiv1-p3}
   C_1\le \delta_\Omega(x)\le C_2,\quad \forall\,x\in \Omega_0. 
\end{equation}
Therefore,
  $$\int_{\Omega_0}\Phi\left(\frac{|u_0(x)|}{\delta^s_\Omega(x)}\right)\frac{dx}{\ln^{p^+_\Phi}(2R/\delta_\Omega(x))}\asymp \int_{\Omega_0}\Phi\left(|u_0(x)|\right) dx.$$
To prove \eqref{2} for other $u_i$, consider the transformation  $F:\mathbb{R}^N\rightarrow\mathbb{R}^N$ such that $F(x^\prime,x_N)=(x^\prime,x_N-\gamma_{x_i}(x^\prime))$. 
Then, by Lemma \ref{lips}, we have 
$$\delta_\Omega(x)\asymp    x_N-\gamma_{x_i}(x^\prime)=\xi_N,\quad \forall\,x\in \Omega\cap \Omega_i,$$ where $F(x)=(\xi_1,\ldots,\xi_N)$. 
Observe that, $F$ is an invertible Lipschitz function and $F(\Omega\cap\Omega_i)\subset \mathbb{R}^{N-1}\times (0,\infty)$ (see \eqref{eqn1-s4p3}).
 Thus, applying  Lemma \ref{changeofvar} and Lemma \ref{lem2-p3} with $\alpha_1=\alpha_2=0$, we get
 \begin{multline*}
     \int_{\Omega\cap \Omega_i}\Phi\left(\frac{|u_i(x)|}{\delta^s_\Omega(x)}\right)\frac{dx}{\ln^{p^+_\Phi }(2R/\delta_\Omega(x))} 
 \leq C\int_{F(\Omega\cap\Omega_i)}\Phi\left(\frac{|u_i\circ F^{-1}(\xi)|}{\xi_N^{s}}\right)\frac{d\xi}{\ln^{p^+_\Phi }(2R/\xi_N)}\\\leq  C\int_{F(\Omega\cap\Omega_i)} \Phi(|u_i\circ F^{-1}(\xi)|)d\xi+C\int_{F(\Omega\cap\Omega_i)}\int_{F(\Omega\cap\Omega_i)}\Phi(|D_s(u_i\circ F^{-1})(x,y)|)d\mu\\=C\int_{\Omega\cap\Omega_i} \Phi(|u_i(x)|)dx+C\int_{\Omega\cap\Omega_i}\int_{\Omega\cap\Omega_i}\Phi(|D_su_i(x,y)|)d\mu.
 \end{multline*}
This completes the proof.
\qed

\smallskip

\noindent\textbf{Proof of Theorem \ref{Orlicz-Hardy1*-p3}.}  Let $\Omega$ be a bounded Lipschitz domain and $sp^+_\Phi<1.$ Let  $u\in \mathcal{C}^1_c(\Omega).$ In this proof, we use the decomposition of $u$ defined in the proof of Theorem \ref{Orlicz-Hardy2-p3}.  By using \eqref{parti} and \eqref{H2-p3}, we get
$$\int_{\Omega}\Phi\left(\frac{|u(x)|}{\delta^s_\Omega(x)}\right)dx\le C \sum_{i=0}^n\int_{\Omega}\Phi\left(\frac{|u_i(x)|}{\delta^s_\Omega(x)}\right)dx.$$ 
Since $u_i$ satisfies \eqref{eqn1-thm1-3}, it is sufficient to prove the required inequality \eqref{1} for all $u_i$.
 Use \eqref{eqiv1-p3} to get
 $$\int_{\Omega_0}\Phi\left(\frac{|u_0(x)|}{\delta^s_\Omega(x)}\right)dx\asymp \int_{\Omega_0}\Phi\left(|u_0(x)|\right) dx,$$
and hence \eqref{1} is proved for $u_0.$
 To prove \eqref{1} for other $u_i$, consider the transformation $F:\mathbb{R}^N\rightarrow\mathbb{R}^N$ such that $F(x^\prime,x_N)=(x^\prime,x_N-\gamma_{x_i}(x^\prime))$.
Then, by Lemma \ref{lips}, we have 
 $$\delta_\Omega(x)\asymp    x_N-\gamma_{x_i}(x^\prime)=\xi_N,\quad \forall\,x\in \Omega\cap \Omega_i,$$ where $F(x)=(\xi_1,\ldots,\xi_N)$. 
Notice that $F$ is an invertible Lipschitz function and $F(\Omega\cap\Omega_i)\subset \mathbb{R}^{N-1}\times (0,\infty)$ (see \eqref{eqn1-s4p3}). Therefore, applying Lemma \ref{changeofvar} and Lemma \ref{lem3}, we obtain  
 \begin{multline*}
     \int_{\Omega\cap \Omega_i}\Phi\left(\frac{|u_i(x)|}{\delta^s_\Omega(x)}\right)dx
 \asymp \int_{F(\Omega\cap\Omega_i)}\Phi\left(\frac{|u_i\circ F^{-1}(\xi)|}{\xi_N^{s}}\right)d\xi\\\leq  C\int_{F(\Omega\cap\Omega_i)} \Phi(|u_i\circ F^{-1}(\xi)|)d\xi+\int_{F(\Omega\cap\Omega_i)}\int_{F(\Omega\cap\Omega_i)}\Phi(|D_s(u_i\circ F^{-1})(x,y)|)d\mu\\=C\int_{\Omega\cap\Omega_i} \Phi(|u_i(x)|)dx+C\int_{\Omega\cap\Omega_i}\int_{\Omega\cap\Omega_i}\Phi(|D_su_i(x,y)|)d\mu.
 \end{multline*}
Hence, the proof.
\qed

\smallskip

 We prove the following proposition before giving the proof for Theorem \ref{Orlicz-Hardy2*-p3}.
 \begin{prop}\label{Thm1.2-p3}
Let $N\ge 1,\,s\in (0,1),$ and $\Omega$ be a bounded Lipschitz domain. For any Young function $\Phi\in \D,$ if $sp_\Phi^->1$
then the fractional boundary Hardy inequality \eqref{Hardy-p3} holds.
 \end{prop}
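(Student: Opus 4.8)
The plan is to mimic the proof of Theorem~\ref{Orlicz-Hardy1*-p3}, but in the range $sp^-_\Phi>1$, where one expects \eqref{Hardy-p3} to hold with no lower order term on the right. The main step is the flat-boundary analogue of Lemma~\ref{lem3} for $sp^-_\Phi>1$: there is $C=C(s,N,\Phi,T)>0$ with
\begin{equation*}
\int_{\Omega_{n,T}}\Phi\!\left(\frac{|u(x)|}{x_N^s}\right)dx\le C\,I_{\Phi,\Omega_{n,T}}(u),\qquad\forall\,u\in\mathcal{C}_c^1(\Omega_{n,T}),
\end{equation*}
now \emph{without} a $\int\Phi(|u|)$ term. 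Granting this, the partition of unity and flattening argument of the proof of Theorem~\ref{Orlicz-Hardy1*-p3} (via Lemma~\ref{lips} and Lemma~\ref{changeofvar}) reduces a general bounded Lipschitz domain to this flat case, modulo the point discussed in the last paragraph.

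To prove the flat estimate I would keep the dyadic decomposition $\Omega_{n,T}=\bigcup_{k\le-1}A_k$, $A_k=\bigcup_iA_k^i$ of Lemma~\ref{lem3}, but now run the recursion \emph{towards the boundary}. Summing Lemma~\ref{lemm-1} (with $\alpha_1=\alpha_2=0$) over $i$ and $k$ gives $\int_{\Omega_{n,T}}\Phi(|u|/x_N^s)\,dx\le C\sum_{k\le-1}2^{kN}a_k+C\,I_{\Phi,\Omega_{n,T}}(u)$ with $a_k=\sum_i\Phi(2^{-ks}|(u)_{A_k^i}|)$. Fixing $A_k^i$ below a cube $A_{k+1}^j$ and applying Lemma~\ref{Lem-H-p3} to $\Phi(2^{-(k+1)s}|(u)_{A_{k+1}^j}|)$, the key point is that, since $2^{-s}<1$, inequality \eqref{H3-p3} produces the \emph{contracting} bound $\Phi(2^{-(k+1)s}|(u)_{A_k^i}|)\le 2^{-sp^-_\Phi}\Phi(2^{-ks}|(u)_{A_k^i}|)$ — this is where $p^-_\Phi$ (rather than $p^+_\Phi$) enters. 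Using Lemma~\ref{Lem-avg} on the difference term, averaging over the $2^{N-1}$ cubes below $A_{k+1}^j$, summing over $j$ and multiplying by $2^{(k+1)N}$ yields, with $b_k:=2^{kN}a_k$ and $E_k:=I_{\Phi,A_k\cup A_{k+1}}(u)$, a recursion $b_{k+1}\le\theta\,b_k+C'E_k$ where $\theta=2^{1-sp^-_\Phi}\Lambda$ and $\Lambda>1$ is fixed. Since $sp^-_\Phi>1$, choose $\Lambda$ close enough to $1$ that $\theta<1$. As $u$ has compact support, $(u)_{A_k^i}=0$, hence $b_k=0$, for all $k$ below some level; iterating the recursion from there and summing the convergent geometric series (with $\sum_kE_k\le 2\,I_{\Phi,\Omega_{n,T}}(u)$) gives $\sum_kb_k\le C\,I_{\Phi,\Omega_{n,T}}(u)$. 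No residual term appears, precisely because the sum terminates where the data vanish; this is the structural difference from Lemma~\ref{lem3}, whose sum runs the other way and leaves the top average $a_{-1}$, producing the $\int\Phi(|u|)$ term there.

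I expect the main obstacle to be removing the lower order term after patching. Cutting $u$ off by $u=\sum_{i=0}^n\eta_iu$ costs, through Lemma~\ref{lem-ext}, a term $\int_\Omega\Phi(|u|)\,dx$, and the interior patch $\Omega_0$ (where $\delta_\Omega\asymp 1$) only gives $\int_{\Omega_0}\Phi(|u_0|/\delta_\Omega^s)\asymp\int_{\Omega_0}\Phi(|u_0|)$; so the argument so far yields only
\begin{equation*}
\int_\Omega\Phi\!\left(\frac{|u(x)|}{\delta_\Omega^s(x)}\right)dx\le C\,I_{\Phi,\Omega}(u)+C\int_\Omega\Phi(|u(x)|)\,dx,
\end{equation*}
and one must absorb the last term. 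This is exactly where $sp^-_\Phi>1$ is essential: in this range the fractional Orlicz Friedrichs inequality $\int_\Omega\Phi(|u|)\,dx\le C\,I_{\Phi,\Omega}(u)$ holds for every $u\in\mathcal{C}_c^1(\Omega)$. I would prove it by running the same chaining directly on $\Omega$: decompose $\Omega$ into Whitney-type cubes of side $\asymp\delta_\Omega$, carry each cube together with its neighbours to a flat configuration through the Lipschitz charts so that Proposition~\ref{poincare} applies, and chain the averages from a fixed reference cube towards $\partial\Omega$, where $u$ vanishes. This uses no cutoff, hence creates no lower order term, and bounds $\int_\Omega\Phi(|u|/\delta_\Omega^s)\,dx$ — and a fortiori $\int_\Omega\Phi(|u|)\,dx$, since $\delta_\Omega$ is bounded — by $I_{\Phi,\Omega}(u)$. (Alternatively one may deduce this Friedrichs inequality from Proposition~\ref{poincare} by a Rellich-type compactness argument, using $p^-_\Phi>1/s>1$.) Feeding it back absorbs $\int_\Omega\Phi(|u|)\,dx$ and establishes \eqref{Hardy-p3}.
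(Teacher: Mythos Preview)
Your approach is correct in outline but takes a very different route from the paper. The paper's proof is essentially a two-line verification: it invokes \cite[Theorem~1.2]{roy2022}, which asserts that \eqref{Hardy-p3} holds whenever
\[
\liminf_{\lambda\to 0^+}\sup_{t>0}\frac{\Phi(\lambda t)}{\lambda^{1/s}\Phi(t)}=0,
\]
and then observes via \eqref{H3-p3} that $\Phi(\lambda t)\le\lambda^{p_\Phi^-}\Phi(t)$ for $\lambda\in(0,1)$, so the quotient is at most $\lambda^{(sp_\Phi^--1)/s}\to 0$ when $sp_\Phi^->1$. No dyadic machinery, no partition of unity, no absorption argument is needed here; the heavy lifting was already done in the cited paper.

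Your reconstruction is self-contained and the flat-boundary step is sound: reversing the direction of the recursion so that the contraction factor comes from $\Phi(2^{-s}a)\le 2^{-sp_\Phi^-}\Phi(a)$ (this is indeed where $p_\Phi^-$ rather than $p_\Phi^+$ enters), and using that $b_k=0$ for $k$ deep enough, correctly kills the residual average. The structural weakness is the last paragraph. After localisation via Lemma~\ref{lem-ext} you are left with the Friedrichs inequality $\int_\Omega\Phi(|u|)\,dx\le C\,I_{\Phi,\Omega}(u)$, and your two proposed remedies are each non-trivial in themselves: the Whitney chaining on a general Lipschitz $\Omega$ requires a Boman-type chain condition and careful bookkeeping that you have not carried out, and in fact already yields the full Hardy inequality, rendering the flat-boundary and patching steps superfluous; the compactness route requires the compact embedding $W^{s,\Phi}_0(\Omega)\hookrightarrow L^\Phi(\Omega)$, which is available in the literature but which you would need to cite or prove. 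So your argument, while workable, is considerably longer than necessary and its final step is only sketched, whereas the paper disposes of the proposition by reducing it to an already established criterion.
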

 \begin{proof}
    It follows from Theorem 1.2 of \cite{roy2022} that if $\Phi$ satisfies
     \begin{equation}\label{eqn-thm-p3}
          \lim \inf_{\lambda\to 0+}\left\{\sup_{t>0}\frac{\Phi(\lambda t)}{\lambda^\frac{1}{s}\Phi(t)}\right\}=0,
      \end{equation}
  then \eqref{Hardy-p3} holds. By using \eqref{H3-p3}, we get 
  \begin{equation*}
     \frac{\Phi(\lambda t)}{\lambda^s\Phi(t)}\le \frac{\lambda^{p_\Phi^-}\Phi(t)}{\lambda^\frac{1}{s}\Phi(t)}=\lambda^\frac{sp^-_\Phi-1}{s},\quad\forall\,\lambda\in (0,1),\;\forall\,t>0.
  \end{equation*}
 If $sp^-_\Phi>1$, then the above inequality implies that $\Phi$ satisfies \eqref{eqn-thm-p3}. This completes the proof.
 \end{proof}
\smallskip

\noindent\textbf{Proof of Theorem \ref{Orlicz-Hardy2*-p3}.} $(i)$ Let $\Omega$ be the domain above the graph of a Lipschitz function $\gamma:\mathbb{R}^{N-1}\rightarrow\mathbb{R},$ $sp^+_\Phi=1,$ and $R>0$. 
Let $u\in \mathcal{C}_c^1(\Omega)$ such that $supp(u)\subset \Omega_R=\{x\in \Omega:\delta_\Omega(x)<R\}.$ Choose  $R_1,R_2\in (0,1)$ such that $R_1<R_2<R.$  Let $\chi\in \mathcal{C}_c^\infty(\Omega)$ such that $\chi(x)=1$ for all $x\in \overline{\Omega_{R_1}},$  $\chi(x)=0$ for all $x\in \Omega\setminus\Omega_{R_2},$ and $0\le \chi\leq 1.$ Then 
$$u=u_1+u_2,\quad\text{where}\;u_1=\chi u,\,u_2=(1-\chi)u.$$ Thus, $supp(u_1)\subset \Omega_{R_2}$ and $supp(u_2)\subset \Omega_R\setminus\Omega_{R_1}.$ Now, use \eqref{H2-p3} to get
\begin{equation*}
    \int_{\Omega}\Phi\left(\frac{|u(x)|}{\delta^s_\Omega(x)}\right)\frac{dx}{\ln^{p^+_\Phi }(\rho(x))}\le C\sum_{i=1}^2 \int_{\Omega}\Phi\left(\frac{|u_i(x)|}{\delta^s_\Omega(x)}\right)\frac{dx}{\ln^{p^+_\Phi }(\rho(x))},
\end{equation*}
where $\rho(x)=\frac{2R}{\delta_\Omega(x)}.$ Therefore, according to Lemma \ref{lem-ext}, it suffices to prove \eqref{3} for $u_1$ and $u_2$.
To prove \eqref{3} for $u_1$, consider the transformation $F:\mathbb{R}^N\rightarrow\mathbb{R}^N$ such that $F(x^\prime,x_N)=(x^\prime,x_N-\gamma(x^\prime))$. Then, by  Lemma \ref{lips}, 
 $$\delta_\Omega(x)\asymp\xi_N,\quad \forall\,x\in  \Omega,$$ where $F(x)=(\xi_1,\ldots,\xi_N).$
Thus, $F(\Omega_{R_2})\subset\mathbb{R}^{N-1}\times(0,1)$, and there exists $n_1\in \mathbb{N}$ such that $supp(u_1\circ F^{-1})\subset (-n_1,n_1)^{N-1}\times (0,1)$ on $F(\Omega_{R_2}).$   
Now, applying  Lemma \ref{changeofvar} and Lemma \ref{lem2-p3}  with $\alpha_1=\alpha_2=0$, we obtain 
 \begin{multline*} \int_{\Omega_1}\Phi\left(\frac{|u_1(x)|}{\delta^s_\Omega(x)}\right)\frac{dx}{\ln^{p^+_\Phi }(2R/\delta_\Omega(x))} 
 \leq  C\int_{F(\Omega_1)}\Phi\left(\frac{|u_1\circ F^{-1}(\xi)|}{\xi_N^{s}}\right)\frac{d\xi}{\ln^{p^+_\Phi }(2R/\xi_N)}\\\leq C\int_{F(\Omega_1)} \Phi(|u_1\circ F^{-1}(\xi)|)d\xi+C\int_{F(\Omega_1)}\int_{F(\Omega_1)}\Phi(|D_s(u_1\circ F^{-1})|)d\mu\\=C\int_{\Omega_1} \Phi(|u_1(x)|)dx+C\int_{\Omega_1}\int_{\Omega_1}\Phi(|D_su_1(x,y)|)d\mu.
 \end{multline*}
 Since    $R_1\le \delta_\Omega(x)< R,\;\forall\, x\in \Omega_R\setminus\Omega_{R_1},$
we have $$\int_{\Omega_R\setminus\Omega_{R_1}}\Phi\left(\frac{|u_2(x)|}{\delta^s_\Omega(x)}\right)\frac{dx}{\ln^{p^+_\Phi }(2R/\delta_\Omega(x))} 
 \asymp \int_{\Omega_R\setminus\Omega_{R_1}}\Phi\left(|u_2(x)|\right) dx.$$
Hence, the conclusion follows.
\smallskip

\noindent$(ii)$ Let  
$\Omega\in \mathcal{A}_3,$ $sp^-_\Phi=N>1,$ and  $R>0$. Choose $r>1$  such that $\Omega^c\subset B_{rR}(0)$. 
Let  $u\in \mathcal{C}_c^1(\Omega)$ and $\chi\in \mathcal{C}_c^\infty(B_{2rR}(0))$ be such that $\chi(x)=1$ for all $x\in B_{rR}(0)$ and $0\le \chi\le 1.$ 
Then
$$u=u_1+u_2,\quad\text{where}\;u_1=\chi u,\,u_2=(1-\chi)u.$$ 
So, $supp(u_1)\subset  B_{2rR}(0)\cap\Omega$ and $supp(u_2)\subset B_{rR}(0)^c.$   
Thus, due to Lemma \ref{lem-ext}, it is sufficient to prove \eqref{4} for $u_1$ and $u_2$. Set $\rho(x)=\max\left\{\frac{2R}{\delta_\Omega(x)},\frac{2\delta_\Omega(x)}{R}\right\}$. Then, we have $\rho(x)\ge 2$, and $\delta_{B_{2rR}(0)\cap\Omega}(x)\le \delta_\Omega(x)$ for every $x\in B_{2rR}(0)\cap\Omega.$ Since $B_{2rR}(0)\cap\Omega$ is a bounded Lipschitz domain and $sp^-_\Phi>1$, by Proposition \ref{Thm1.2-p3}, we get
\begin{multline*}
    \int_{B_{2rR}(0)\cap\Omega}\Phi\left(\frac{|u_1(x)|}{\delta^s_\Omega(x)}\right)\frac{dx}{\ln^{p^+_\Phi}(\rho(x))}\le C\int_{B_{2rR}(0)\cap\Omega}\Phi\left(\frac{|u_1(x)|}{\delta^s_{B_{2rR}(0)\cap\Omega}(x)}\right)dx
    \\\le C\int_{B_{2rR}(0)\cap\Omega}\int_{B_{2rR}(0)\cap\Omega}\Phi(|D_su_1(x,y)|)d\mu.
\end{multline*}
To prove \eqref{4} for $u_2,$ observe that $\delta_\Omega(x)\geq C|x|,\;\forall\,x\in B_{rR}(0)^c$. Moreover, one can verify that
$$\ln^{p^+_\Phi}\left(\rho(x)\right)> C\ln^{p^+_\Phi}\left(\frac{2|x|}{rR}\right),\quad\forall\,x\in B_{rR}(0)^c.$$ Thus, applying
 Lemma \ref{ext}, we get
\begin{multline*}
    \int_{B_{rR}(0)^c}\Phi\left(\frac{|u_2(x)|}{\delta^s_\Omega(x)}\right)\frac{dx}{\ln^{p^+_\Phi}(\rho(x))}\le C\int_{B_{rR}(0)^c}\Phi\left(\frac{|u_2(x)|}{|x|^s}\right)\frac{dx}{\ln^{p^+_\Phi}(2|x|/rR)}
    \\\le C\int_{B_{rR}(0)^c}\int_{B_{rR}(0)^c}\Phi(|D_su_2(x,y)|)d\mu+C\int_{B_{rR}(0)^c}\Phi(|u_2(x)|)dx.
\end{multline*}
Consequently, the inequality \eqref{4} also holds for $u_2$. This completes the proof.
\qed

\smallskip

\noindent\textbf{Proof of Theorem \ref{Orlicz-Hardy3-p3}.} 
Let $u \in \mathcal{C}_c^1(\mathbb{R}^N_+)$ such that $\text{supp}(u)\subset \mathbb{R}^{N-1}\times (0,R)$. Let $n_0>0$ such that $\frac{2n_0}{ R}\in \mathbb{N}$ and $\text{supp}(u)\subset (-n_0,n_0)^{N-1}\times (0,R)$. From the definitions of $A_k$ and $A_k^i$ as defined in Section \ref{flatbunndary-p3} with $n=n_0$ and $T=R>0$, we have
$$(-n_0,n_0)^{N-1}\times (0,R)=\bigcup_{k=-\infty}^{-1}A_k,\qquad A_k=\bigcup_{i=1}^{\sigma_k}A_k^i,$$ where $\sigma_k=2^{(-k+1)(N-1)}n_0^{N-1}R^{1-N}$. Let $\tau=s-\alpha_1-\alpha_2$ and $m\in \mathbb{Z}$ such that $m<-2.$ 
Since $\tau p^+_\Phi=1,$ from \eqref{thm3-eq1-p3} we get
\begin{equation}\label{hass}
\sum_{k=m}^{-1}\int_{A_k}\Phi\left(\frac{|u(x)|}{x_N^\tau}\right)\frac{dx}{\ln^{p^+_\Phi}(2R/x_N)}\le C\sum_{k=m}^{-1}\frac{2^{kN}}{(-k)^{p^+_\Phi }}a_k+C\int_{\mathbb{R}^N_+}\int_{\mathbb{R}^N_+} \Phi\left(|D_su(x,y)|x_N^{\alpha_1}y_N^{\alpha_2}\right)d\mu,
\end{equation}
where $a_k=\sum_{i=1}^{\sigma_{k}}\Phi\left(2^{-k\tau}|(u)_{A_k^i}|\right).$
Thus it is sufficient to prove that 
\begin{equation}\label{starhass}
    \sum_{k=m}^{-1}\frac{2^{kN}}{(-k)^{p^+_\Phi }}a_k\leq C\int_{\mathbb{R}^N_+}\int_{\mathbb{R}^N_+} \Phi\left(|D_su(x,y)|x_N^{\alpha_1}y_N^{\alpha_2}\right)d\mu.
\end{equation}
From \eqref{STAR} we have
\begin{equation}
    \frac{2^{kN}a_k}{(-k)^{p^+_\Phi -1}}\leq \frac{2^{N(k+1)}a_{k+1}}{(-k-1/2)^{p^+_\Phi -1}}+C\int_{A_k\cup A_{k+1}}\int_{A_k\cup A_{k+1}}\Phi\left(|D_su(x,y)|x_N^{\alpha_1}y_N^{\alpha_2}\right)d\mu,\quad k\in \mathbb{Z}^-.
\end{equation}
Sum up the above inequalities from $k=m$ to $-1$ to obtain
\begin{equation*}
    \sum_{k=m}^{-1} \frac{2^{kN}a_k}{(-k)^{p^+_\Phi -1}}\leq \sum_{k=m}^{-1}\frac{2^{N(k+1)}a_{k+1}}{(-k-1/2)^{p^+_\Phi -1}}+C\int_{\mathbb{R}^N_+}\int_{\mathbb{R}^N_+}\Phi\left(|D_su(x,y)|x_N^{\alpha_1}y_N^{\alpha_2}\right)d\mu.
\end{equation*}
By changing sides, rearranging, and re-indexing, we get 
\begin{multline*}
  \frac{2^{mN}a_m}{(-m)^{p^+_\Phi -1}}+ \sum_{k=m+1}^{-1} \left\{\frac{1}{(-k)^{p^+_\Phi -1}}-\frac{1}{(-k+1/2)^{p^+_\Phi -1}}\right\}2^{kN}a_{k}\\\le2^{p^+_\Phi -1}a_0+C\int_{\mathbb{R}^N_+}\int_{\mathbb{R}^N_+}\Phi\left(|D_su(x,y)|x_N^{\alpha_1}y_N^{\alpha_2}\right)d\mu.
\end{multline*}
Now, using $$\frac{1}{(-k)^{p^+_\Phi -1}}-\frac{1}{(-k+1/2)^{p^+_\Phi -1}}\asymp \frac{1}{(-k)^{p^+_\Phi }},\quad\forall\,k\in \mathbb{Z}^-$$ and 
$(-m)^{p_\Phi^+-1}\le (-m)^{p_\Phi^+},$ we get 
\begin{equation*}
     \sum_{k=m}^{-1} \frac{2^{kN}}{(-k)^{p^+_\Phi }}a_{k}\le2^{p^+_\Phi -1}a_0+C\int_{\mathbb{R}^N_+}\int_{\mathbb{R}^N_+}\Phi\left(|D_su(x,y)|x_N^{\alpha_1}y_N^{\alpha_2}\right)d\mu.
\end{equation*}
Since $supp(u)\subset (-n_0,n_0)^{N-1}\times (0,R)$ and $ A^i_0\subset \mathbb{R}^{N-1}\times [R,2R)$, we have $(u)_{A^i_0}=0$ and hence $$a_0=\sum_{i=1}^{\sigma_0}\Phi\left(|(u)_{A_0^i}|\right)=0.$$ Consequently, \eqref{starhass} follows. This completes the proof.
\qed

\begin{center}
    \textbf{Acknowledgement}
\end{center}

The author would like to thank Dr. Anoop T.V. (IIT Madras) and Dr. Prosenjit Roy (IIT Kanpur) for the valuable discussions.

\bibliographystyle{abbrvurl}
\bibliography{Reference}

\end{document}